\newenvironment{proof}{\par \noindent {\sc Proof}\ --\ }{$\square$ \par}
\def\A{\mathcal{A}}
\def\QX{\Q\langle X \rangle}
\def\1{\mathbb{1}}
\def\scal#1#2{\langle #1\bv#2 \rangle}
\def\bv{\mid}
\def\abs#1{\bv\!#1\!\bv}
\def\ncp#1#2{#1\langle #2\rangle}
\newcounter{per1}
\def\2#1{\ifnum#1<10 0\fi\the#1}
\xdef\isodayandtime{
{\2\day-\2\month-\the\year\space\2{\count0}:%
\2{\count2}}}
\newcommand{\bi}{\begin{itemize}}
\newcommand{\ei}{\end{itemize}}
\newcommand{\bd}{\begin{description}}
\newcommand{\ed}{\end{description}}
\newcommand{\calH}{{\mathcal H}}
\newcommand{\calC}{{\mathcal C}}
\newcommand{\N}{{\mathbb N}}
\newcommand{\Z}{{\mathbb Z}}
\newcommand{\Q}{{\mathbb Q}}
\newcommand{\R}{{\mathbb R}}
\newcommand{\C}{{\mathbb C}}
\newcommand{\K}{{\mathbb K}}
\newcommand{\Frac}[2]{\displaystyle \frac{#1}{#2}}
\newcommand{\Sum}[2]{\displaystyle{\sum_{#1}^{#2}}}
\newcommand{\Prod}[2]{\displaystyle{\prod_{#1}^{#2}}}
\newcommand{\Lim}[1]{\displaystyle{\lim_{#1}\ }}
\def\Lyn{{\mathcal Lyn}}
\def\abs#1{|#1|}
 \def\shuffle{\mathop{_{^{\sqcup\!\sqcup}}}} 
\gdef\stuffle{\;%
  \setlength{\unitlength}{0.0125cm}%
  \begin{picture}(20,10)(220,580) 
  \thinlines 
  \put(220,592){\line( 0,-1){ 10}} 
  \put(220,582){\line( 1, 0){ 20}} 
  \put(240,582){\line( 0, 1){ 10}} 
  \put(230,592){\line( 0,-1){ 10}} 
  \put(225,587){\line( 1, 0){ 10}} 
  \end{picture}\; 
}
\newtheorem{corollary}{Corollary}
\newtheorem{proposition}{Proposition}
\newtheorem{theorem}{Theorem}
\newtheorem{lemma}{Lemma}
\newtheorem{definition}{Definition}
\newtheorem{example}{Example}
\newcommand{\Li}{\operatorname{Li}}
\def\L{\mathrm{L}}
\def\H{\mathrm{H}}
\def\A{\mathrm{A}}
\def\G{\mathrm{G}}
\def\deg{\mathrm{deg}}
\newcommand{\poly}[2]{#1 \langle #2 \rangle}
\def\QX{\poly{\Q}{X}}
\def\Lim{\displaystyle\lim}
\def\Sum{\displaystyle\sum}
\def\Prod{\displaystyle\prod}
\def\Frac{\displaystyle\frac}
\def\path{\rightsquigarrow}
\def\bv{\mid}
\def\abs#1{\bv\!#1\!\bv}
\gdef\minishuffle{{\scriptstyle \shuffle}}  
\gdef\ministuffle{{\scriptstyle \stuffle}}
\def\deg{\mathop\mathrm{deg}\nolimits}
\def\binom#1#2{{#1\choose#2}}
\def\scal#1#2{\langle #1\bv#2 \rangle}
\def\ncp#1#2{#1\langle #2\rangle}
\journal{Journal of  Symbolic Computation}
\begin{document}

\begin{frontmatter}

\title{\Large \textbf{\noindent
Harmonic sums and polylogarithms\\ at non-positive multi-indices}}
\author{\normalsize{
G\'erard H. E. Duchamp$^{\diamondsuit}$ - V. Hoang Ngoc Minh$^{\heartsuit}$ - Ngo Quoc Hoan$^{\blacklozenge}$
}}

\address{\textit{\footnotesize
$^{\diamondsuit}$ Paris XIII University, 93430 Villetaneuse, France,
gheduchamp@gmail.com\\
$^{\heartsuit}$ Lille II University, 59024 Lille, France,
hoang@univ-lille2.fr\\
$^{\blacklozenge}$ Paris XIII University, 93430 Villetaneuse, France,
quochoan\_ngo@yahoo.com.vn\\
}}

\begin{abstract}
Extending Eulerian polynomials and Faulhaber's formula\footnote{First seen and computed up to order 17 by Faulhaber.
The modern form and proof are credited to Bernoulli \cite{Knuth}.}, we study several combinatorial aspects of harmonic sums and polylogarithms
at non-positive multi-indices as well as their structure. Our techniques are based on the combinatorics of noncommutative generating series
in the shuffle Hopf algebras giving a global process to renormalize the divergent polyzetas at non-positive multi-indices.
\end{abstract}

\begin{keyword}
Harmonic sums; Polylogarithms; Bernoulli polynomials; Multi-Eulerian polynomials;  Bernoulli numbers, Eulerian numbers.

\end{keyword}
\end{frontmatter}

\section{Introduction}

The story begins with the celebrated Euler sum \cite{euler1}
\begin{eqnarray*}
\zeta(s)=\sum_{n\geq 1}n^{-s},&&s\in\N,s>1.
\end{eqnarray*}
Euler gave an explicit formula expressing the following ratio (with $\mathrm{i}^2=-1$)~:
\begin{eqnarray}\label{pair}
\forall j\in\N_+,&\Frac{\zeta(2j)}{(2\mathrm{i}\pi)^{2j}}=-\Frac12\Frac{b_{2j}}{(2j)!}&\in\Q,
\end{eqnarray}
where $\{b_j\}_{j \in \N}$ are the Bernoulli numbers. Multiplying two such sums, he obtained
\begin{eqnarray*}
\zeta(s_1)\zeta(s_2)=\zeta(s_1,s_2)+\zeta(s_1+s_2)+\zeta(s_2,s_1),
\end{eqnarray*}
where the {\it polyzeta} are given by
\begin{eqnarray*}
\zeta(s_1,\ldots,s_r)=\sum_{n_1>\ldots>n_r>0}n_1^{-s_1}\ldots n_r^{-s_r},&&r,s_1,\ldots,s_r\in\N_+,s_1>1.
\end{eqnarray*}
Establishing relations among polyzetas, $\zeta(s_1,s_2)$ with $s_1+s_2\le16$, he proved \cite{euler2}
\begin{eqnarray}\label{double}
\forall s>1,&&\zeta(s,1)=\frac12\zeta(s+1)-\frac12\sum_{j=1}^{s-1}\zeta(j+1)\zeta(s-j).
\end{eqnarray}
Extending \eqref{double}, Nielsen showed that\footnote{Nielsen wished to establish
an identity analogous to the one given in (\ref{pair}) for $\zeta(2p+1)$ but he did not succeed.
We have explained in \cite{SLC43,FPSAC97} these difficulties and impossibility \cite{VJM}.}
$\zeta(s,\{1\}^{r-1})$ is an homogenous polynomial of degree $n+r$ of $\{\zeta(2),\ldots,\zeta(s+r)\}$
with rational coefficients \cite{nielsen1,nielsen2,nielsen3} (see also \cite{FPSAC95,FPSAC96}).

After that, Riemann extended $\zeta(s)$ as a meromorphic function \cite{riemann} on $\C$.
The series converges absolutely in $\calH_1=\{s\in\C\vert\Re(s)>1\}$.
Moreover, if $\Re(s)\ge a>1$, it is dominated, term by term, 
by the absolutely convergent series, of general term $n^{-a}$ so, by Cauchy's criterion, it converges in $\calH_1$ 
(compact uniform convergence and then represents a holomorphic fonction in\footnote{which is actually the domain 
of absolute convergence of this univariate series.} $\calH_1$ \cite{riemann}).

In the same vein $\zeta(s_1,\ldots,s_r)$ is well-defined on $\C^r$ as a meromorphic function \cite{Akiyama,Goncharov,Zhao,Matsumoto}.
Denoting $t_0=1,u_{r+1}=1$ and
\begin{eqnarray}\label{lambda}
\lambda(z):={z}/{(1-z)},
\end{eqnarray}
this can be done via the following integral representations
obtained by the convolution theorem and by changes of variables \cite{IMACS,FPSAC95,FPSAC96}
\begin{eqnarray*}
\zeta(s_1,\ldots,s_r)
&=&\int_0^1\frac{dt_1}{1-t_1}\frac{\log^{s_1-1}({t_0}/{t_1})}{\Gamma(s_1)}\ldots\int_0^{t_r-1}
\frac{dt_r}{1-t_r}\frac{\log^{s_r-1}({t_{r-1}}/{t_r})}{\Gamma(s_r)}\cr
&=&\int_{[0,1]^r}\prod_{j=1}^r\log^{s_j-1}(\frac1{u_j}) \frac{\lambda(u_1\ldots u_j)}{\Gamma(s_j)}\frac{du_j}{u_j}
=\int_{\R_+^r}\prod_{j=1}^r\frac{\lambda(e^{-(u_1\ldots u_j)})}{\Gamma(s_j)} \frac{du_j}{u_j^{1-s_j}}.
\end{eqnarray*}

In fact, one has two ways of thinking polyzetas as limits, fulfilling identities.
Firstly, they are limits of {\it polylogarithms}, at $z=1$, and secondly, as truncated sums,
they are limits of {\it harmonic sums} when the upper bound tends to $+\infty$.
The link between these holomorphic and arithmetic functions is as follows. 
 
For any $r$-uplet $(s_1,\ldots,s_r)\in\N_+^r$, $r\in\N_+$ and for any $z\in\C$ such that $\abs{z}<1$,
the polylogarithm and the harmonic sum are well defined by
\begin{eqnarray*}\label{polylogarithm}
\Li_{s_1,\ldots,s_r}(z):=\sum_{n_1>\ldots>n_r>0}\frac{z^{n_1}}{n_1^{s_1}\ldots n_r^{s_r}}&\mbox{and}&
\H_{s_1,\ldots,s_r}(N):=\sum_{N\ge n_1>\ldots>n_r>0}\frac1{n_1^{s_1}\ldots n_r^{s_r}}.
\end{eqnarray*}
These objects appeared within the functional expansions in order to represent the nonlinear dynamical systems
in quantum electrodynanics and have been developped by Tomonaga, Schwinger and Feynman \cite{dyson}.
They appeared then in the singular expansion of the solutions and their successive
(ordinary or functional) derivations \cite{fliess1} of nonlinear differential equations
with three singularities \cite{BBM,QED,cade,acta} and then they also appeared in the asymptotic expansion
of the Taylor coefficients. The main challenge of these expansions lies in the divergences
and leads to problems of {\it regularization} and {\it renormalization}
which can be solved by combinatorial technics \cite{CK,interface,QED,FH,fliess1,cade,acta,lemurakami}.

Let\footnote{For $m\ge2$, the domain of absolute convergence of $\zeta(s_1,\ldots,s_r)$
contains the domain $\calH_r$ \cite{Matsumoto}.}
$\calH_r=\{(s_1,\ldots,s_r)\in\C^r\vert\forall m=1,\ldots,r,\Re(s_1)+\ldots+\Re(s_m)>m\}$.
From the analytic continuation point of view \cite{Akiyama,jtnb,jtnb2,Goncharov,Matsumoto,Zhao} and after a theorem by Abel, one has
\begin{eqnarray*}
\forall (s_1,\ldots,s_r)\in\calH_r,&&
\zeta(s_1,\ldots,s_r)=\lim_{z \rightarrow 1}\Li_{s_1,\ldots,s_r}(z)=\lim_{N \rightarrow\infty}\H_{s_1,\ldots,s_r}(N).
\end{eqnarray*}
This theorem is no more valid in the divergent cases as, for
$(s_1,\ldots,s_r)\in\N^r$,
\begin{eqnarray*}
\Li_{\{1\}^k,s_{k+1},\ldots,s_r}(z)=&\Sum_{n_1>\ldots>n_r>0}\Frac{z^{n_1}}{n_1\ldots n_kn_{k+1}^{s_{k+1}}\ldots n_r^{s_r}},&\cr
\H_{\{1\}^k,s_{k+1},\ldots,s_r}(N)=&\Sum_{N\geq n_1>\ldots>n_r>0}\Frac1{n_1\ldots n_kn_{k+1}^{s_{k+1}}\ldots n_r^{s_r}},&\cr
\Li_{\{1\}^r}(z)=&\Sum_{n_1>\ldots>n_r>0}\Frac{z^{n_1}}{n_1\ldots n_r}&=\Frac{1}{r!}\log^r\frac{1}{1-z},\cr
\H_{\{1\}^r}(N)=&\Sum_{N\geq n_1>\ldots>n_r>0}\frac1{n_1\ldots n_r}& = \Sum_{k=0}^N \frac{ S_1(k, r) }{k!},\cr
\Li_{\{0\}^r}(z)=&\Sum_{n_1>\ldots>n_r>0}z^{n_1}&=\biggl(\Frac{z}{1-z}\biggr)^r,\cr
\H_{\{0\}^r}(N)=&\Sum_{N\geq n_1>\ldots>n_r>0}1&=\binom{N}{r},\cr
\Li_{-s_1,\ldots,-s_r}(z)=&\Sum_{n_1>\ldots>n_r>0}n_1^{s_1}\ldots n_r^{s_r}\;z^{n_1},&\cr
\H_{-s_1,\ldots,-s_r}(N)=&\Sum_{N\geq n_1>\ldots>n_r>0}n_1^{s_1}\ldots n_r^{s_r}.&
\end{eqnarray*}
Here, the Stirling numbers of first and second kind denoted $S_1(k,j)$ and $S_2(k,j)$ respectively, can be defined, for any $n, k\in\N,n\ge k$, by
\begin{eqnarray*}
\sum_{t=0}^n S_1(n,t) x^t=x(x+1)\ldots (x+n -1)&\mbox{and}&
S_2(n, k)=\frac{1}{k!}\sum_{i =0}^k (-1)^i\binom{k}{i}(k-i)^n.
\end{eqnarray*}

These divergent cases require the renormalization of the corresponding divergent polyzetas.
This is already done for the corresponding four first cases \cite{Daresbury,JSC,cade}
and it has to be completely done for the remainder \cite{FKMT,GZ,MP}.
Since the algebras of polylogarithms and of  harmonic sums, at strictly positive indices, are isomorphic respectively
to the shuffle, $(\QX,\shuffle,1_{X^*})$, and quasi-shuffle algebras, $(\Q\left\langle{ Y}\right\rangle,\stuffle,1_{Y^*})$,
both admitting, as pure transcendence bases, the Lyndon words $\Lyn X$ and $\Lyn Y$
over $X=\{x_0,x_1\}$ ($x_0<x_1$) and $Y=\{y_i\}_{i\ge1}$ ($y_1>y_2>\ldots$) respectively, we can index,
as in \cite{acta,VJM}, these polylogarithms, harmonic sums and polyzetas, at positive indices, by words.

Moreover, using
\begin{enumerate}
\item The one-to-one correspondence between the combinatorial compositions $(\{1\}^k,s_{k+1},\ldots,s_r)$,
the words $y_1^ky_{s_{k+1}}\ldots y_{s_r}$ and $x_1^kx_0^{s_{k+1}-1}x_1\ldots x_0^{s_r-1}x_1$ for
the indexing by words~:
$\Li_{x_0^{s_1-1}x_1\ldots x_0^{s_r-1}x_1}:=\Li_{s_1,\ldots,s_r}$
and $\H_{y_{s_1}\ldots y_{s_r}}:=\H_{s_1,\ldots,s_r}$.
Here, $\pi_Y$ is the adjoint of $\pi_X$ for the canonical scalar products where $\pi_X$ is the morphism of AAU,
$\ncp{k}{Y}\rightarrow \ncp{k}{X}$, defined by $\pi_X(y_k)=x_0^{k-1}x_1$.

\item The pure transcendence bases, denoted $\{S_l\}_{l\in\Lyn X}$ and $\{\Sigma_l\}_{l\in\Lyn Y}$,
of respectively $(\QX,\shuffle,1_{X^*})$ and $(\Q\left\langle{ Y}\right\rangle,\stuffle, 1_{Y^*})$;
and dually, the bases of Lie algebras of primitive elements $\{P_l\}_{l\in\Lyn X}$ and $\{\Pi_l\}_{l\in\Lyn Y}$
of respectively the bialgebras $(\QX,{\tt conc},1_{X^*},\Delta_{\shuffle},\epsilon)$
and $(\Q\left\langle{ Y}\right\rangle,{\tt conc},1_{Y^*},\Delta_{\stuffle},\epsilon)$ \cite{BDHHT,acta,VJM,reutenauer},

\item The noncommutative generating series in their factorized forms, {\it i.e.}
\begin{eqnarray*}
\L:=\prod_{l\in\Lyn X}\exp(\Li_{S_l}P_l)
&\mbox{and}&
\H:=\Prod_{l\in\Lyn Y}\exp(\H_{\Sigma_l}\Pi_l),\cr
Z_{\minishuffle}:=\prod_{l\in\Lyn X\setminus X}\exp(\zeta(S_l)P_l)
&\mbox{and}&
Z_{\ministuffle}:=\prod_{l\in\Lyn Y\setminus \{y_1\}}\exp(\zeta(\Sigma_l)\Pi_l),
\end{eqnarray*}
\end{enumerate}
we established an Abel like theorem \cite{cade,acta,VJM}, {\it i.e.} 
\begin{eqnarray}\label{Abel}
\Lim_{z\rightarrow1}\exp\biggl[y_1\log\frac1{1-z}\biggr]\pi_Y\L(z)=
\Lim_{N\rightarrow\infty}\exp\biggl[\Sum_{k\ge1}\H_{y_k}(N)\frac{(-y_1)^k}{k}\bigg]\H(N)=\pi_YZ_{\minishuffle}.
\end{eqnarray}
leading to discover a bridge equation for these two algebraic structures\footnote{
By definition, in $Z_{\minishuffle}$ and $Z_{\ministuffle}$, only {\it convergent}
polyzetas arise and and we do not need any regularization process, studied earlier in \cite{JSC, SLC43,SLC44}
and constructed in \cite{acta,VJM} (see also other processes developped in \cite{Boutet,IKZ}).}
\begin{eqnarray}\label{pont}
\exp\biggl[\sum_{k\ge2}\zeta(k)\frac{(-y_1)^k}{k}\biggr]
\Prod_{l\in\Lyn Y\setminus\{y_1\}}^{\searrow}\exp(\zeta(\Sigma_l)\Pi_l)
=\pi_Y\prod_{l\in\Lyn X\setminus X}^{\searrow}\exp(\zeta(S_l)P_l).
\end{eqnarray}
Extracting the coefficients in the generating series allows to explicit counter-terms which eliminate the divergence
of $\{\Li_w\}_{w\in x_1X^*}$ and $\{\H_w\}_{w\in y_1Y^*}$. Identifying local coordinates in (\ref{pont}), allows
to calculate the finite parts associated to divergent polyzetas and to describe the graded core of the kernel of $\zeta$
by {\it algebraic} generators\footnote{The graded core of a subspace $W$ is the largest graded subspace contained in $W$.
It is conjectured that this core is {\it all} the kernel. One of us proposed a tentative demonstration of this statement in \cite{acta,VJM}. 
If this holds, then this kernel would be generated by homogenous polynomials and the quotient be automatically $\N$-graded.}.

As in previous works, to study combinatorial aspects of harmonic sums and polylogarithms\footnote{
From now on, without contrary mention, it will be supposed that the indices of harmonic sums
and polylogarithms are taken as non-positive multi-indices.} at non-positive multi-indices, we associate
$(s_1,\ldots,s_r)\in{\mathbb N}^r$ to $y_{s_1}\ldots y_{s_r}\in Y^*_0$, where $Y_0 =\{y_k\}_{k\geq 0}$,
and index them by words (see Section \ref{combinatorics})~:
\begin{enumerate}
\item For $\H^-_{y_{s_1}\ldots y_{s_r}}:=\H_{-s_1,\ldots,-s_r}$, we will extend (Theorem \ref{degreeofHarmonic})
a form of Faulhaber's formula which expresses $\H^-_p$, for $p\ge0$, as a polynomial of degree $p+1$
with coefficients involving the Bernoulli numbers using the exponential generating series
$\sum_{n>0}(\sum_{k=1}^{n} k^{p}){z^n}/{n!}=(1-e^{pz})/(e^{-z}-1)$ \cite{Knuth}.
\item For $\Li^-_{y_{s_1}\ldots y_{s_r}}:=\Li_{-s_1,\ldots,-s_r}$, we will base ourselves (Theorem \ref{pr2})
on the Eulerian polynomials, $A_n(z)=\sum_{k=0}^{n-1}A_{n, k} z^{k}$ and the coefficients $A_{n,k}$'s
are the Eulerian numbers defined as $A_{n, k}=\sum_{j=0}^k(-1)^{j} \binom{n+1}{j}(k+1-j)^n$ \cite{GHMposter,foata}. 
\end{enumerate}

Afterwards, for their global renormalisation, we will consider their noncommutative generating series
and will establish an Abel like theorem (Theorem \ref{series})
analogous to \eqref{Abel}. Finally,  in Section \ref{structure}, to determine their algebraic structures
(Theorems \ref{onto0}, \ref{onto} and \ref{Tonghop}), we will construct a new law, denoted $\top$,
on $\Q\left\langle{Y_0}\right\rangle$, and will prove that the following morphisms of algebras,
mapping $w$ to $\H^-_w$ and $\Li^-_w$, respectively, are {\it surjective} and will completely describe their kernels
\begin{eqnarray}
\H^-_{\bullet}:(\ncp{\Q}{Y_0}, \stuffle)\longrightarrow(\Q\{\H^-_w\}_{w\in Y_0^*},.),\label{maps}\\
\Li^-_{\bullet}:(\ncp{\Q}{Y_0},\top)\longrightarrow(\Q\{\Li^-_w\}_{w\in Y_0^*},.).\label{maps2}
\end{eqnarray}

\section{Background}

\subsection{Combinatorial background of the quasi-shuffle Hopf algebras}\label{stuffle}
Let $Y_0$ be totally ordered by $y_0>y_1>\ldots$. We denote also $Y_0^+=Y_0Y_0^*$ and $Y^+=YY^*$,
the free semigroups of non-empty words.
The weight and length of $w=y_{s_1}\ldots y_{s_r}$,
are respectively the numbers $(w):=s_1+\ldots+s_r$ and $|w|:=r$.

Let $\K\left\langle{Y_0}\right\rangle$ be the vector space\footnote{or module, 
$\K$ is currently a ring.} freely generated by $Y_0^*$, {\it i.e.} $\K^{(Y_0^*)}$, equipped by
\begin{enumerate}
\item The concatenation (or by its associated coproduct, $\Delta_{\tt conc}$).
\item The {\it shuffle} product, {\it i.e.} the commutative product defined, for any $x,y\in Y_0$ and $u,v,w\in Y_0^*$, by 
\begin{eqnarray*}
w\shuffle 1_{Y_0^*}=1_{Y_0^*}\shuffle w=w&\mbox{and}&xu\shuffle yv=x(u\shuffle yv)+y(xu\shuffle v),
\end{eqnarray*}
or by its associated\footnote{in fact, this is the adjoint of the law.}  dual coproduct,
$\Delta_{\minishuffle}$, defined, on the letters $y_k\in Y_0$, by
$\Delta_{\minishuffle}(y_k)=y_k\otimes 1_{Y^*_0}+1_{Y_0^*}\otimes y_k$
and extended by morphism. It satisfies, for any $u,v,w\in Y_0^*$,
$\langle\Delta_{\minishuffle}(w)\mid u\otimes v\rangle=\langle w\mid u\shuffle v\rangle$.

\item The {\it quasi-shuffle} (or stuffle\footnote{This word was introduced by Borwein and {\it al.}.
The sticky shuffle product Hopf algebras were introduced independently
by physicists due to their r\^ole in stochastic analysis.}, or sticky shuffle) product,
{\it i.e.} the commutative product defined  by, for any $y_i,y_j\in Y_0$ and $u,v,w\in Y_0^*$, by 
\begin{eqnarray*}
w\stuffle 1_{Y_0^*}&=&1_{Y_0^*}\stuffle w =w,\\
y_iu\stuffle y_jv&=&y_j(y_iu\stuffle v)+y_i(u\stuffle y_jv)+y_{i+j}(u\stuffle v),
\end{eqnarray*}
or by its associated dual coproduct, $\Delta_{\ministuffle}$, defined on the letters $y_k\in Y_0$ by
\begin{eqnarray*}
\Delta_{\ministuffle}(y_k)=y_k\otimes 1_{Y^*_0}+1_{Y_0^*}\otimes y_k+\sum_{i+j=n}y_i\otimes y_j
\end{eqnarray*}
and extended by morphism. In fact, this is the adjoint of the law as it satisfies,
for all $u,v,w\in Y_0^*$, $ \scal{\Delta_{\ministuffle}(w)}{u\otimes v}=\scal{w}{u\stuffle v}.$
\item With the counit defined, for any $P\in\K \left\langle{Y_0}\right\rangle$, by
$\epsilon(P)=\scal{P}{1_{Y_0^*}}$, one gets\footnote{Out of these four bialgebras,
only the third one does not admit an antipode due to the presence of the group-like element $g=(1+y_0)$ which admits no inverse.}
\begin{eqnarray*}
\calH_{\minishuffle}=(\K\left\langle{Y_0}\right\rangle,{\tt conc}, 1_{Y_0^*},\Delta_{\shuffle},\epsilon)
&\mbox{and}&
\calH_{\minishuffle}^{\vee}=(\K \left\langle{Y_0}\right\rangle,\shuffle,1_{Y_0^*},\Delta_{\tt conc},\epsilon),\\
\calH_{\ministuffle}=(\K \left\langle{Y_0}\right\rangle,{\tt conc}, 1_{Y_0^*},\Delta_{\stuffle},\epsilon)
&\mbox{and}&
\calH_{\ministuffle}^{\vee}=(\K \left\langle{Y_0}\right\rangle,\stuffle,1_{Y_0^*},\Delta_{\tt conc},\epsilon).
\end{eqnarray*}
\end{enumerate}

\subsection{Integro-differential operators}\label{operators}
Let $\calC=\C[z,z^{-1},(1-z)^{-1}]$ and $\1_{\calC}:{\mathbb C}-(]-\infty,0]\cup[1,+\infty[)\rightarrow{\mathbb C}$ maps $z$ to $1$.
Let us consider the following differential and integration operators acting on $\calC\{\Li_w\}_{w\in X^*}$ \cite{acta}~:
\begin{eqnarray*}
\partial_z={d}/{dz},&\theta_0=z\partial_z,&\theta_1=(1-z)\partial_z,\\
\forall f\in\calC,\qquad\iota_0(f)=\int_{z_0}^z\Frac{f(s)ds}{s}&\mbox{and}&\iota_1 (f)=\int_{z_0}^z\Frac{f(s)ds}{1-s}.
\end{eqnarray*}
In here, $z_0=0$ if $\iota_0 f$ (resp. $\iota_1 f$) exists\footnote{This can be made precise, remarking that
$\calC\{\Li_w\}_{w\in X^*}=\calC\otimes_\C\C\{\Li_w\}_{w\in X^*}$ and then 
study the integral on the basis $\frac{z^k}{(1-z)^l}\otimes \Li_w$.} and else $z_0=1$.
One can check easily that 
$\theta_0+\theta_1=\partial_z$ and $\theta_0\iota_0=\theta_1\iota_1=\mathrm{Id}$ \cite{QED}.

For any $u=y_{t_1}\ldots y_{t_r}\in Y_0^*$, one can also rephrase the construction of polylogarithms as
$\Li_u=({\iota_0^{t_1-1}\iota_1\ldots\iota_0^{t_r-1}\iota_1})1_{\Omega}$, $\Li^-_u=({\theta_0^{t_1+1}\iota_1\ldots\theta_0^{t_r+1}\iota_1})1_{\Omega}$ and \cite{QED}
\begin{eqnarray}\label{noyaux}
\theta_0\Li_{x_0\pi_Xu}=\Li_{\pi_Xu};\theta_1\Li_{x_1\pi_Xu}=\Li_{\pi_Xu};\iota_0\Li_{\pi_Xu}=\Li_{x_0\pi_Xu};\iota_1\Li_u=\Li_{x_1\pi_Xu}.
\end{eqnarray}
The subspace $\calC\{\Li_w\}_{w\in X^*}$ (which is, in fact, a subalgebra) is then closed under the action
of  $\{\theta_0,\theta_1,\iota_0,\iota_1\}$ and the operators $\theta_0\iota_1$ and $\theta_1\iota_0$
admit respectively $\lambda$ (see \eqref{lambda}) and $1/\lambda$ as eigenvalues ($\calC\{ \Li_w\}_{w\in X^*}$ is their eigenspace) \cite{QED}~:
\begin{eqnarray}\label{remark}
\forall f \in\calC\{\Li_w\}_{w\in X^*},\quad(\theta_0\iota_1)f=\lambda f&\mbox{and}&(\theta_1\iota_0)f=f/\lambda,\\
\forall w\in X^*,\quad(\theta_0\iota_1) \Li_w=\lambda \Li_w &\mbox{and}&(\theta_1\iota_0)\Li_w=\Li_w/\lambda.
\end{eqnarray}
\subsection{Eulerian polynomials and Stirling numbers}
It is well-known that \cite{GHMposter,foata} for any $n \in \N_+$,
\begin{eqnarray}\label{Euler}
\Li^-_{y_n}(z)=\frac{z A_{n}(z)}{(1-z)^{n+1}}
&=&\sum_{k=0}^n\biggl(\sum_{j=k-1}^{n-1}A_{n,j}\binom{j+1}{k}(-1)^k\biggr)\frac{1}{(1-z)^{n+1-k}}\\
&=&\Sum_{t=1}^{n+1}\Frac{(t-1)!(-1)^{t+n+1}S_2(n+1,t)}{(1-z)^t}.
\end{eqnarray}
Using the notations given in Section \ref{operators}, one also has
\begin{eqnarray}\label{stirling}
\mbox{If }k>0&\mbox{then }\theta_0^k\lambda(z)=\Frac1{1-z}\Sum_{j=1}^kS_2(k,j)j!{\lambda^j(z)}&\mbox{else }\lambda(z).
\end{eqnarray}
Let $T$ be the invertible matrix $(t_{i,j})_{i\ge1}^{j\ge0}\in Mat_{\infty}(\Q_{\geq 0})$ defined by
\begin{eqnarray}\label{T}
\mbox{If }i > j&\mbox{then } t_{i,j}= \Frac{S_1(i,j+1)}{(i-1)!}&\mbox{else }0.
\end{eqnarray}

\section{Combinatorial aspects of harmonic sums and polylogarithms}\label{combinatorics}
\subsection{Combinatorial aspects of harmonic sums at non-positive  multi-indices}\label{Hcombinatorics}
 
\begin{definition}[Extended Bernoulli polynomials]
Let  $\{B_w\}_{w\in Y^*_0},\{\beta_w\}_{w\in Y^*}$ be two families of polynomials defined,
for any $r\ge1, z\in\mathbb{C}, y_{n_1}\ldots y_{n_r}\in Y_0^*$ by
\begin{eqnarray*}
B_{y_{s_1}\ldots y_{s_r}}(z+1)=B_{y_{s_1}\ldots y_{s_r}}(z)+s_1 z^{s_1 -1}B_{y_{s_2}\ldots y_{s_r}}(z)
\end{eqnarray*}
and, for any 
$y_{n_1}\ldots y_{n_r}\in Y^*$ we set $\beta_w(z):=B_w(z)-B_w(0)$.

One defines also, for any $w=y_{n_1}\ldots y_{n_r}\in Y_0^*$ (with $b_w:=B_w(0)$) and
\begin{eqnarray*}
b'_{y_{n_k}}:=b_{y_{n_k}}&\mbox{and}&
b'_{y_{n_k}\ldots y_{n_r}}:=b_{y_{n_k} \ldots y_{n_r}}-\sum_{j=0}^{r-1-k}b_{n_{y_{k+j+1}}\ldots y_{n_{r}}}b'_{y_{n_k}\ldots y_{n_{k+j}}}.
\end{eqnarray*}
\end{definition}
Note that, for any $s_1\neq1$, one has $B_{y_{s_1}\ldots y_{s_r}}(0)=B_{y_{s_1}\ldots y_{s_r}}(1)=b_{y_{s_1}\ldots y_{s_r}}$
and the polynomials $\{B_w\}_{w\in Y^*_0}$ depend on the (arbitrary) choice of $\{b_w\}_{w\in Y_0^*}$. In here, by \eqref{pair}, we put $b_{y_{s_1}}=b_{s_1}$ then $B_{y_{s_1}}(z)=B_{s_1}(z)$ is the $s_1^{th}-$ Bernoulli polynomial \cite{words03}.

Now, let $M = \begin{pmatrix}m_{i,j}\end{pmatrix}^{i \geq 0}_{j\geq 1}\in Mat_{\infty} (\Q)$ be the invertible matrix\footnote{In this paper, $M^{-1}$ and $M^t$ denote as usual the inverse and transpose of the matrix $M$.}
 defined by 
\begin{eqnarray}\label{M}
m_{i,j} =
\begin{cases}
0,& \mbox{if} \quad i < j-1,\\
b_i,& \mbox{if} \quad j= 1, i\neq 1\\
1/2,& \mbox{if} \quad j= i = 1\\
im_{i-1,j-1}/j,& \mbox{if} \quad i > j-1 > 0.
\end{cases}
\end{eqnarray}
Let us consider also the following invertible matrix $D\in Mat_r(\N)$
$$d_{i,j}=\left\{\begin{array}{ccl}
0,&\mbox{if}&1\le i < j \leq r,\cr
n_1\ldots n_i,&\mbox{if}&1\le i=j\le r,\cr
n_1\ldots n_jb_{y_{n_{j+1}}\ldots y_{n_{i}}},&\mbox{if}&1\le j< i\le r,
\end{array}\right.$$
Then its inverse,  $D^{-1}=(v_{i,j})$, can be also described as follows
$$v_{i,j}=\left\{\begin{array}{ccl}
0,&\mbox{if}&1\le i<j\le r,\\
1/(n_1 \ldots n_i),&\mbox{if}&1\le i=j\le r,\\
-b'_{y_{n_{j+1}}\ldots y_{n_i}}/(n_1\ldots n_i),&\mbox{if}&1\le  j<i\le r.
\end{array}\right.$$

\begin{proposition}[\cite{GHMposter}]
For any $N>0$ and $y_{n_1}\ldots y_{n_r}\in Y^*$, one has
\begin{eqnarray*}
\beta_{y_{n_1}\ldots y_{n_r}}(N+1)=
\sum_{k=1}^r(\prod_{i=1}^kn_i)b_{y_{n_{k+1}}\ldots y_{n_r}}\H^-_{y_{n_1-1}\ldots y_{n_k-1}}(N).
\end{eqnarray*}
\end{proposition}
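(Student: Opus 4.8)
The plan is to prove the identity by induction on the length $r$ of $w=y_{n_1}\cdots y_{n_r}\in Y^*$, the driving mechanism being a telescoping summation of the recurrence defining the extended Bernoulli polynomials. Write $w'=y_{n_2}\cdots y_{n_r}$ for the word obtained by deleting the first letter of $w$. Summing the relation $B_w(z+1)-B_w(z)=n_1 z^{n_1-1}B_{w'}(z)$ over $z=0,1,\dots,N$ telescopes the left-hand side to $B_w(N+1)-B_w(0)=\beta_w(N+1)$, whence
$$\beta_w(N+1)=n_1\sum_{z=0}^{N}z^{n_1-1}\,B_{w'}(z).$$
One then substitutes $B_{w'}(z)=b_{w'}+\beta_{w'}(z)$, with the convention $B_{1_{Y^*}}=b_{1_{Y^*}}=1$; this convention also makes the base case $r=1$ reduce to the Faulhaber-type expression of $\beta_{n_1}(N+1)=B_{n_1}(N+1)-B_{n_1}(0)$ through the power sum $\H^-_{y_{n_1-1}}(N)$.

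The substitution splits $\beta_w(N+1)$ into two pieces. The first one, $n_1 b_{w'}\sum_{z}z^{n_1-1}=n_1 b_{w'}\,\H^-_{y_{n_1-1}}(N)$, is exactly the $k=1$ summand on the right-hand side, because $\prod_{i=1}^{1}n_i=n_1$ and $b_{w'}=b_{y_{n_2}\cdots y_{n_r}}$. For the second piece, $n_1\sum_{z}z^{n_1-1}\beta_{w'}(z)$, I apply the induction hypothesis to the shorter word $w'$ \emph{with the running value $z-1$ in place of $N$} (using $\beta_{w'}(z)=\beta_{w'}((z-1)+1)$): this expands $\beta_{w'}(z)$ as a linear combination, indexed by $j=1,\dots,r-1$, of the truncated harmonic sums $\H^-_{y_{n_2-1}\cdots y_{n_{j+1}-1}}(z-1)$ with coefficients $\bigl(\prod_{i=2}^{j+1}n_i\bigr)\,b_{y_{n_{j+2}}\cdots y_{n_r}}$. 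Interchanging the order of summation, the outer summation over $z$ together with the inner truncated sum over $a_2>\cdots>a_{j+1}$ recombine into a single harmonic sum $n_1\,\H^-_{y_{n_1-1}y_{n_2-1}\cdots y_{n_{j+1}-1}}(N)$; reindexing by $k=j+1$ turns these into the summands $k=2,\dots,r$ on the right-hand side. Adding back the $k=1$ term closes the induction.

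The routine part is the indexing bookkeeping in the interchange step (the products $\prod_{i=1}^{k}n_i$, the shifted constant words $y_{n_{k+1}}\cdots y_{n_r}$, and the nested inequalities defining $\H^-$). The step needing genuine care is the contribution of the lower endpoint $z=0$ in the telescoped sum --- equivalently, the $0^0$ convention and the precise range ($>0$ versus $\ge 0$) used in the harmonic sums over $Y_0^*$ --- which must be chosen so as to stay compatible with the instance $B_w(1)-B_w(0)=n_1\,0^{\,n_1-1}B_{w'}(0)$ of the recurrence; this is exactly where the constant terms $b_\bullet$ (and, for the inverse relation, the derived quantities $b'_\bullet$ encoded in the matrices $D$ and $D^{-1}$) play their role, and it is the only place where boundary terms cannot simply be discarded.
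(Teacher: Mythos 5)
Your proof is correct and follows essentially the same route as the paper: telescoping the defining recurrence of the extended Bernoulli polynomials over the integer arguments, splitting $B_{w'}=b_{w'}+\beta_{w'}$, and iterating on the length of the word (which the paper indicates with a ``$\vdots$'' and you formalize as an induction on $r$ using $\H^-_{y_{s}u}(N)=\sum_{z=1}^{N}z^{s}\H^-_{u}(z-1)$). Your explicit attention to the $z=0$ endpoint and the $0^0$ convention when $n_1=1$ is a welcome precision that the paper's own telescoping glosses over.
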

\begin{proof}
Successively $B_{y_{n_1}\ldots y_{n_r}}(N+1)-B_{y_{n_1}\ldots y_{n_r}}(N)=n_1N^{n_1-1}B_{y_{n_2}\ldots y_{n_r}}(N),\allowbreak
\ldots, B_{y_{n_1}\ldots y_{n_r}}(2)-b_{y_{n_1}\ldots y_{n_r}}=n_11^{n_1-1}B_{y_{n_2}\ldots y_{n_r}}(1)$.
It follows the expected result:
\begin{eqnarray*}
\beta_{y_{n_1} \ldots y_{n_r}}(N+1)
&=&n_1\Sum_{k_1 =1}^Nk_1^{n_1-1}B_{y_{n_2}\ldots y_{n_r}}(k_1)\cr
&=&n_1\Sum_{k_1 =1}^Nk_1^{n_1-1}\beta_{y_{n_2} \ldots y_{n_r}} (k_1)+n_1 \sum_{k_1 =1}^Nk_1^{n_1 - 1}b_{y_{n_2}\ldots y_{n_r}}\cr
&\vdots&\cr
&=&\sum_{k=1}^r(\prod_{i=1}^kn_i)b_{y_{n_{k+1}}\ldots y_{n_r}}\H^-_{y_{n_1-1}\ldots y_{n_k-1}}(N).
\end{eqnarray*}
\end{proof}

\begin{theorem}\label{degreeofHarmonic}
\begin{enumerate}
\item For any $w\in Y^*_0, \H^-_w$ is a polynomial function , in $N$, of degree $(w)+|w|$.
\item If $w\in Y^*$, there exists a polynomial $\G^-_w$, of degree $(w)-1$,
such that $\H^-_w(N)=(N+1)N(N-1) \ldots (N-|w|+1)\G^-_w(N).$
Conversely, for any $N,k\in\N_+$, one has $N^k=\sum_{j =0}^{k-1}(-1)^{j+k-1}\binom{k}{j}\H^-_{y_j}(N)$.
\item We get (for $r=1$, this corresponds to Faulhaber's formula \cite{Faulhaber1,Knuth})~:
\begin{eqnarray*}
\H^-_{y_{n_1}\ldots y_{n_r}}(N)=\frac{\beta_{y_{n_1+1}\ldots y_{n_r +1}}(N+1)
-\sum_{k=1}^{r-1}b'_{y_{n_{k+1}+1}\ldots y_{n_r +1}}\beta_{y_{n_1+1}\ldots y_{n_k +1}}(N+1)}{\prod_{i=1}^r (n_i +1)}.
\end{eqnarray*}
\end{enumerate}
\end{theorem}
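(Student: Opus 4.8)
The plan is to prove the three assertions in order, each one building on its predecessors and on the Proposition. For Assertion~1 I would start from the first-return recursion obtained by isolating the largest summation variable: writing $w=y_{n_1}\ldots y_{n_r}\in Y_0^*$ and $w'=y_{n_2}\ldots y_{n_r}$,
\begin{eqnarray*}
\H^-_w(N)=\H^-_w(N-1)+N^{n_1}\,\H^-_{w'}(N-1),\qquad N\ge1,
\end{eqnarray*}
with the convention $\H^-_{1_{Y_0^*}}\equiv1$. One then inducts on $|w|$: if $\H^-_{w'}$ is a polynomial of degree $(w')+|w'|=(w)-n_1+|w|-1$, the correction term $N^{n_1}\H^-_{w'}(N-1)$ is a polynomial of degree $(w)+|w|-1$, and the summation $P(N)=\sum_{m=1}^N Q(m)$ of a polynomial $Q$ of degree $d$ is a polynomial of degree $d+1$ with nonzero leading coefficient (the Bernoulli/Faulhaber computation \cite{Faulhaber1,Knuth}); hence $\deg\H^-_w=(w)+|w|$.

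For Assertion~2 I would, assuming now $w\in Y^*$ (so all letters are $\ge y_1$), locate enough roots of the polynomial $\H^-_w$. Combinatorially $\H^-_w(m)=0$ for $0\le m\le|w|-1$, since a chain $m\ge n_1>\cdots>n_{|w|}\ge1$ cannot occur in that range; and evaluating the recursion above at $N=0$, where $0^{n_1}=0$ exactly because $n_1\ge1$, gives $\H^-_w(-1)=\H^-_w(0)=0$. These are $|w|+1$ pairwise distinct roots, so $(N+1)N(N-1)\cdots(N-|w|+1)$ divides $\H^-_w(N)$, and the quotient $\G^-_w$ has degree $(w)+|w|-(|w|+1)=(w)-1$ by Assertion~1. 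The converse identity is a one-line telescoping: since $\H^-_{y_j}(N)=\sum_{m=1}^N m^j$ and the binomial theorem gives $\sum_{j=0}^{k-1}(-1)^{j+k-1}\binom{k}{j}m^j=m^k-(m-1)^k$, summing over $1\le m\le N$ collapses to $N^k$.

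Assertion~3 is an inversion problem resting on the Proposition. Applying the Proposition not only to $w$ but to every left factor $y_{n_1}\ldots y_{n_j}$, $1\le j\le r$, yields the linear system $\big(\beta_{y_{n_1}\ldots y_{n_j}}(N+1)\big)_{1\le j\le r}=D\,\big(\H^-_{y_{n_1-1}\ldots y_{n_k-1}}(N)\big)_{1\le k\le r}$, whose matrix is precisely the lower-triangular $D$ introduced just before the Proposition, with invertible diagonal entries $n_1\cdots n_k$. Reading off the last coordinate of $D^{-1}\big(\beta_{y_{n_1}\ldots y_{n_j}}(N+1)\big)_j$ and then substituting $n_i+1$ for $n_i$ throughout produces the stated Faulhaber-type formula; for $r=1$ it degenerates to $\sum_{k=1}^N k^{n_1}=\big(B_{n_1+1}(N+1)-B_{n_1+1}\big)/(n_1+1)$.

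The one step that is not pure bookkeeping — and the one I would treat most carefully — is checking that the explicit matrix $D^{-1}=(v_{i,j})$ displayed before the theorem genuinely inverts $D$, i.e. that $\sum_k d_{i,k}v_{k,j}=\delta_{i,j}$; the diagonal case is immediate, and for $i>j$ the vanishing $\sum_k d_{i,k}v_{k,j}=0$ unwinds (after cancelling the factors $n_1\cdots n_k$) to exactly the recursion
\begin{eqnarray*}
b'_{y_{n_{j+1}}\ldots y_{n_i}}=b_{y_{n_{j+1}}\ldots y_{n_i}}-\sum_{k=j+1}^{i-1}b_{y_{n_{k+1}}\ldots y_{n_i}}\,b'_{y_{n_{j+1}}\ldots y_{n_k}}
\end{eqnarray*}
defining the constants $b'_w$, so it holds by construction. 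All remaining difficulty is clerical: tracking the index shifts between the Proposition (words $y_{n_i}$, harmonic sums $\H^-_{y_{n_i-1}}$) and the target statement (words $y_{n_i+1}$, harmonic sums $\H^-_{y_{n_i}}$), and using the empty-word convention $b_{1_{Y_0^*}}=1$ that makes the diagonal of $D$ equal to $n_1\cdots n_k$.
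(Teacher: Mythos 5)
Your proposal is correct and follows essentially the same route as the paper: the difference equation $\H^-_{y_sw}(N+1)-\H^-_{y_sw}(N)=(N+1)^s\H^-_w(N)$ together with the degree lemma for Assertion~1, the roots $\{-1,0,\dots,|w|-1\}$ for the divisibility in Assertion~2, and the inversion of the triangular matrix $D$ applied to the Proposition for Assertion~3. The only (harmless) deviation is that you prove the converse identity $N^k=\sum_{j=0}^{k-1}(-1)^{j+k-1}\binom{k}{j}\H^-_{y_j}(N)$ by direct telescoping of $m^k-(m-1)^k$ rather than by inverting the Faulhaber matrix $M$ as the paper does --- a slightly more self-contained verification of the same fact.
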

\begin{proof}
For any $y_n\in Y$ and $w=y_ny_m\in Y^*$, putting $p =(w)+|w|$, we have
\begin{eqnarray*}
\H^-_{y_{n}}(N)&=&\Frac1{n+1}\Sum_{k=0}^{n}\binom{n +1}{k}b_{k}(N+1)^{n+1-k},\\
\H^-_{y_ny_m}(N)&=&\sum_{k=0}^m\sum_{l=0}^{p-1-k}\sum_{q=0}^{p-k-l}\frac{b_kb_l}{(m+1)(p-k)}
\binom{m+1}{k}\binom{p-k}{l}\binom{p-k-l}{q}N^q.
\end{eqnarray*}
\begin{enumerate}
\item As above, $\H^-_{y_n},\H^-_{y_ny_m}$ are polynomials of respective degrees $n+1,n+m+2$.

Since, for any $s\in\N,w\in Y^*_0,N\in\N_+$, $\H^-_{y_sw}(N+1)-\H^-_{y_sw}(N)=N^s\H^-_w(N)$ then
$\H^-_{y_sw}$ satisfies the difference equation  $f(N+1)-f(N)=N^s\H^-_w(N)$ (see Lemma \ref{solutions_diff} below).
For $|w|=k>2$, suppose $\H^-_w$ is a polynomial and $\deg(\H^-_w)=(w)+|w|$.
Now, let $y_s \in Y_0$. Then $\H^-_{y_sw}$ is not constant. Indeed,
$\H^-_{y_sw}(N)=\sum_{n=1}^N n^s\H^-_w (n-1)$.
Moreover, $\H^-_{y_sw}(N+1)-\H^-_{y_sw}(N)=(N+1)^s\H^-_w(N)$.
Thus, by Lemma \ref{solutions_diff}, $\deg(\H^-_{y_sw})=\deg(\H^-_{w})+s+1=(y_sw)+|y_sw|$.

By the definition, for any $w=y_{s_1}\ldots y_{s_r} \in Y^*,\{0,\ldots,r-1\}$ are solutions of the equation $\H^-_w\equiv0$.
There exists then a polynomial $G_w$ such that $\H^-_{y_{s_1}\ldots y_{s_r}}(N)=N(N-1)\ldots(N-r+1)G_w(N)$.
Now, for any $s\ge1$ and $w\in Y^*$, we have $\H^-_{y_sw} (N+1)-\H^-_{y_sw}(N)=(N+1)^s\H^-_w(N)$.
So $-1$ is also solution of the equation $\H^-_w\equiv0$. Then the second result follows.
\item By Faulhaber's formula \cite{Faulhaber1,Knuth}, one has
$\begin{pmatrix}\H^-_{y_i}(N)\end{pmatrix}^t_{i\ge0}=M \begin{pmatrix}N^j\end{pmatrix}^t_{j\ge1}$.
Thus, by inversion matrix, the third result follows.
\item Let $H^-:=\begin{pmatrix}\H^-_{y_{n_1-1}}&\ldots&\H^-_{y_{n_1-1}\ldots y_{n_r -1}}\end{pmatrix}^t$ and
$\beta:=\begin{pmatrix}\beta_{y_{n_1}}&\ldots&\beta_{y_{n_1} \ldots y_{n_r}}\end{pmatrix}^t$.
Hence, $H^-(N)=D^{-1}\beta(N+1)$ and the last result follows.
\end{enumerate}
\end{proof}

\begin{example}\label{exampleharmonic}
\begin{itemize}
\item For $r=1$, 
\begin{eqnarray*}
\H^-_{y_0}(N)\!\!\!&=&\!\!\!N,\cr
\H^-_{y_1}(N)\!\!\!&=&\!\!\!{N(N+1)}/{2},\cr
\H^-_{y_2}(N)\!\!\!&=&\!\!\!{N(N+1)(2N+1)}/{6},\cr
\H^-_{y_3}(N)\!\!\!&=&\!\!\!{N^2(N+1)^2}/{4}.
\end{eqnarray*}
\item For $r=2$,
\begin{eqnarray*}
\H^-_{y_0^2}(N)\!\!\!&=&\!\!\!{N(N-1)}/{2},\cr
\H^-_{y_1^2}(N)\!\!\!&=&\!\!\!{N(N-1)(3N+2)(N+1)}/{24},\cr
\H^-_{y_1y_2}(N)\!\!\!&=&\!\!\!{N(N-1)(N+1)(8N^2+5N-2)}/{120},\cr
\H^-_{y_2y_1}(N)\!\!\!&=&\!\!\!{N(N-1)(N+1)(12N^2+15N+2)}/{120}.
\end{eqnarray*}
\item For $r=3$,
\begin{eqnarray*}
\H^-_{y_0^3}(N)\!\!\!&=&\!\!\!{N(N-1)(N-2)}/{6},\cr
\H^-_{y_1^3}(N)\!\!\!&=&\!\!\!{N^2(N-1)(N-2)(N+1)^2}/{48},\cr
\H^-_{y_1^2y_2}(N)\!\!\!&=&\!\!\!{N(N-1)(N-2)(N+1)(48N^3+19N^2-61N-24)}/{5040},\cr
\H^-_{y_1^2y_3}(N)\!\!\!&=&\!\!\!{N(N-1)(N-2)(N+1)(7N^2+3N-2)(5N^2-3N-12)}/{6720}.
\end{eqnarray*}
\end{itemize}
\end{example}

\subsection{Combinatorial aspects of polylogarithms at non-positive multi-indices}\label{Lcombinatorics}
\begin{definition}[Extended Eulerian polynomials]\label{Eulerianpolynomial} 
For any $w=y_{s_1}\ldots y_{s_r}\in Y^+$, the polynomial $A^-_w$, 
of degree $(w)$, is defined as follows
\begin{eqnarray*}
\mbox{If }r=1,&A^-_w=A_{s_1}\mbox{else } A^-_w=\Sum_{i=0}^{s_1}
\binom{s_1}{i} A^-_{y_i}A^-_{y_{s_1+s_2-i}y_{s_3}\ldots y_{s_r}}&
\end{eqnarray*}
where, for any $n\in\N$, $A_n$ denotes the $n$-th classical Eulerian polynomial.
\end{definition}
Note that the coefficients of the polyomials $\{A_w^-\}_{w\in Y^*_0}$ are integers.
\begin{theorem}\label{pr2}
\begin{enumerate}
\item If $r>1$ then $\Li^-_{y_{s_1}\ldots y_{s_r}}=\sum^{s_1}_{t=0}\binom{s_1}{t}\Li^-_{y_t}\Li^-_{y_{s_1+s_2-t}y_{s_3}\ldots y_{s_r}}.$
\item For any $w\in Y_0^*$, $\Li^-_w(z)=\lambda^{\abs{w}} (z)A^-_w(z)(1-z)^{-(w)}\in\Z[(1-z)^{-1}]$ is a polynomial function of degree\footnote{
Putting $K_w({1}/{z}):={A^-_w(z)}/{z^{(w)}}$, one gets $\Li^w_w (z)=\lambda^{(w)+|w|} (z)K_w({1}/{z})$.} $(w)+\abs{w}$ on $(1-z)^{-1}$.
Conversely, we get
$(1-z)^{-k}=(1-z)^{-1}+\sum_{j=2}^k S_1(k,j)\Li^-_{y_{j-1}}(z)/(k-1)!$.
\item We have
$\Li^-_{y_{s_1}\ldots y_{s_r}}(z)=\lambda(z)^{\abs w}\sum_{i=r}^{s_1+\ldots+s_r}\sum_{j=0}^{s_1\ldots+s_{r-1}}l_{i,j}{z^{i-1-j}}{(1-z)^{-i}}$, where
\begin{eqnarray*}
l_{i,j}&=&\sum\limits_{\underset{k_1+\ldots + k_r = i}{1\leq k_t \leq s_t}} \prod\limits_{n =1}^r(k_n!S_2(s_n,k_n))
\sum\limits^{t_1+\ldots+t_{r-1} = j}_{\underset{1\leq m \leq r-1}{0 \leq t_m\leq k_m}}\prod\limits_{p=1}^{r-1}\\
&&\binom{k_r+\ldots+k_{r-p+1}+p-t_{r-p+1}-\ldots-t_{r-1}}{t_{r-p}}\binom{k_{r-p}+t_{r-p+1}+\ldots t_{r-1}}{k_{r-p}-t_{r-p}}.
\end{eqnarray*}
\end{enumerate} 
\end{theorem}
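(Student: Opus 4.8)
The plan is to handle the three items in turn, relying on the integro-differential calculus of Section~\ref{operators} for the first two and on the series definition of $\Li^-$ for the third.

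For item~1, I would first record the elementary recursion $\Li^-_{y_s u}(z)=\theta_0^{s}\bigl(\lambda(z)\,\Li^-_{u}(z)\bigr)$ for $u\in Y_0^*$, which is immediate from the rewriting $\Li^-_{y_s u}=\theta_0^{s+1}\iota_1\Li^-_u$ of Section~\ref{operators} together with $\theta_0\iota_1 f=\lambda f$ from \eqref{remark}; in particular $\theta_0^t\lambda=\Li^-_{y_t}$. Since $\theta_0=z\partial_z$ is a derivation, the Leibniz formula reads $\theta_0^{s_1}(fg)=\sum_{t=0}^{s_1}\binom{s_1}{t}(\theta_0^tf)(\theta_0^{s_1-t}g)$. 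I apply it with $f=\lambda$ and $g=\theta_0^{s_2}(\lambda\,\Li^-_{y_{s_3}\cdots y_{s_r}})$: then $fg=\lambda\,\Li^-_{y_{s_2}\cdots y_{s_r}}$, so $\theta_0^{s_1}(fg)=\Li^-_{y_{s_1}\cdots y_{s_r}}$, while each right-hand summand is $\binom{s_1}{t}(\theta_0^t\lambda)\bigl(\theta_0^{s_1+s_2-t}(\lambda\,\Li^-_{y_{s_3}\cdots y_{s_r}})\bigr)=\binom{s_1}{t}\Li^-_{y_t}\Li^-_{y_{s_1+s_2-t}y_{s_3}\cdots y_{s_r}}$, which is exactly the claimed identity.

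For item~2, I would prove $\Li^-_w(z)=\lambda^{|w|}A^-_w(z)(1-z)^{-(w)}$ by induction on $|w|$: the case $|w|=1$ is \eqref{Euler} (with $A^-_{y_n}=A_n$, and $A_0=1$ for the letter $y_0$), while the inductive step follows by substituting the inductive hypothesis into item~1, pulling out the common factor $\lambda^{|w|}(1-z)^{-(w)}$, and recognising the remaining polynomial sum as $A^-_w$ via Definition~\ref{Eulerianpolynomial}. That $\Li^-_w\in\Z[(1-z)^{-1}]$ follows from $A^-_w\in\Z[z]$ together with $\lambda=(1-z)^{-1}-1$ and $z=1-(1-z)$; writing $A^-_w(z)=\sum_k c_k(1-z)^k$ with $c_k\in\Z$, the leading term in $(1-z)^{-1}$ is $c_0\lambda^{|w|}(1-z)^{-(w)}$, so the degree is $(w)+|w|$ once one checks $c_0=A^-_w(1)\neq 0$; and indeed $A^-_w(1)>0$ by the recursion of Definition~\ref{Eulerianpolynomial} (base case $A_n(1)=n!$). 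For the converse identity I would use Stirling inversion: substitute the second form of \eqref{Euler}, $\Li^-_{y_{j-1}}(z)=\sum_{t=1}^{j}(t-1)!(-1)^{t+j}S_2(j,t)(1-z)^{-t}$, into its right-hand side, interchange the summations, and invoke the orthogonality $\sum_j(-1)^{k-j}S_1(k,j)S_2(j,t)=\delta_{k,t}$ for the terms with $t\geq 2$, together with $\sum_jS_1(k,j)(-1)^j=0$ (evaluate $x(x+1)\cdots(x+k-1)$ at $x=-1$) and $S_1(k,1)=(k-1)!$ for the constant term coming from $t=1$.

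Item~3 is the computational core, and here I would argue from the series definition rather than from the operator calculus. Inserting $n^{s}=\sum_{k=1}^{s}S_2(s,k)\,k!\binom{n}{k}$ into each factor of $\Li^-_{y_{s_1}\cdots y_{s_r}}(z)=\sum_{n_1>\cdots>n_r\geq 1}n_1^{s_1}\cdots n_r^{s_r}z^{n_1}$ reduces the statement to evaluating, for every $(k_1,\dots,k_r)$ with $1\leq k_t\leq s_t$, the sum $\sum_{n_1>\cdots>n_r\geq 1}\binom{n_1}{k_1}\cdots\binom{n_r}{k_r}z^{n_1}$ and identifying it with $\lambda^{r}(1-z)^{-i}$, where $i=k_1+\cdots+k_r$, times a polynomial in $z$ whose coefficients will be the inner binomial sums of $l_{i,j}$. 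I would carry this out by induction on $r$: peel off $n_r$ through the hockey-stick identity $\sum_{n_{r-1}>n_r\geq 1}\binom{n_r}{k_r}=\binom{n_{r-1}}{k_r+1}$, collapse $\binom{n_{r-1}}{k_{r-1}}\binom{n_{r-1}}{k_r+1}$ into a single binomial in $n_{r-1}$ via a Vandermonde-type convolution $\binom{m}{a}\binom{m}{b}=\sum_{t}\binom{m}{a+b-t}\binom{a+b-t}{a}\binom{a}{t}$ (the index $t$ becoming $t_{r-1}$), and iterate down to a single binomial $\binom{n_1}{K}$ times constants; the closing geometric series $\sum_{n_1\geq K}\binom{n_1}{K}z^{n_1}=z^{K}(1-z)^{-(K+1)}$ supplies the powers of $z$ and of $(1-z)^{-1}$, and factoring out $\lambda^{r}$ rewrites $K$ in the stated form $z^{i-1-j}(1-z)^{-i}$. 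Grouping by $i=\sum_t k_t$ and $j=\sum_m t_m$ reads off $l_{i,j}$, and the range $r\leq i\leq(w)$ recovers the degree from item~2 (letters $y_0$, where $s_t=0$, are treated separately using $\Li^-_{y_0u}=\lambda\,\Li^-_u$). The main obstacle is exactly the bookkeeping of this induction: checking that the successive convolution variables $t_1,\dots,t_{r-1}$ and the accumulated binomial arguments line up with the double product of binomial coefficients in the definition of $l_{i,j}$. I expect this to be pure indexing, requiring no idea beyond items~1 and~2, and best organised by keeping the partial sum, after peeling $n_{r-p}$, in the normal form $\binom{n_{r-p-1}}{k_{r-p-1}}\binom{n_{r-p-1}}{(\text{accumulated index})}$ before applying the convolution again.
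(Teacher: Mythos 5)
Your proof is correct and, for items 1 and 2, essentially the paper's: the paper likewise derives item 1 from the operator representation $\Li^-_w=(\theta_0^{s_1+1}\iota_1)\cdots(\theta_0^{s_{r-1}+1}\iota_1)\Li^-_{y_{s_r}}$ together with $\theta_0\iota_1 f=\lambda f$ (your Leibniz computation is exactly the detail it leaves implicit behind ``the actions of $\theta_i,\iota_i$ yield immediately the expected result''), and it obtains item 2 and its converse by packaging the relation between $\{(1-z)^{-j}\}_{j\ge1}$ and $\{\Li^-_{y_i}\}_{i\ge1}$ into the matrix $T$ of \eqref{T} and inverting it --- your explicit use of the orthogonality $\sum_j(-1)^{k-j}S_1(k,j)S_2(j,t)=\delta_{k,t}$, plus the evaluation of $x(x+1)\cdots(x+k-1)$ at $x=-1$ for the $t=1$ term, is precisely that matrix inversion carried out by hand. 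The genuine divergence is item 3. The paper iterates item 1 to write $\Li^-_w$ as a nested sum of products $(\theta_0^{k_1}\lambda)\cdots(\theta_0^{k_r}\lambda)$ with ranges $0\le k_1\le s_1$, $0\le k_2\le s_1+s_2-k_1$, and so on, and then substitutes \eqref{stirling} into each factor; you instead expand each $n_t^{s_t}$ in the binomial basis via $n^s=\sum_k k!\,S_2(s,k)\binom{n}{k}$ and resum the nested binomial sums with the hockey-stick and product-of-binomials convolutions. Your route has the advantage that the factors $\prod_n k_n!\,S_2(s_n,k_n)$ with $1\le k_t\le s_t$ appear in exactly the form in which they occur in the stated $l_{i,j}$, whereas the paper's expansion produces Stirling numbers indexed by the shifted exponents $s_1+s_2-k_1,\dots$ and still needs a nontrivial reindexing to reach the displayed formula; the paper's route has the advantage of recycling item 1 verbatim. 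Both leave the same final bookkeeping (matching the inner double product of binomial coefficients in $l_{i,j}$) unverified --- you flag this honestly, and the paper's one-line proof does not carry it out either.
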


\begin{proof}
For $w=y_{s_1}\ldots y_{s_r}\in Y^*_0$, $\Li^-_w=(\theta_0^{s_1+1}\iota_1)\ldots(\theta_0^{s_{r-1}+1}\iota_1)\Li^-_{y_{s_r}}$ \cite{QED}. Hence, 
\begin{enumerate}
\item The actions of $\theta_i,\iota_i$ yield immediately the expected result.
Next, using $\theta_i,\iota_i$, one has 
$\Li^-_w(z)=\lambda^{\abs{w}}(z)A^-_w(z)(1-z)^{-(w)}$ and $\Li^-_w\in\Z[(1-z)^{-1}]$.
Hence,  $\Li^-_w$ is a polynomial of degree $(w)+|w|$ $(w)+\abs{w}$ on $(1-z)^{-1}$. 
\item Since $ \begin{pmatrix}(1-z)^{-j}\end{pmatrix}^t_{j\ge1}= T \begin{pmatrix}(1-z)^{-1}&(\Li^-_{y_i}(z))_{i\ge1}\end{pmatrix}^t$
then, by matrix inversion, the expected result follows immediately.
\item The expected result follows by using \eqref{stirling} in the following expressions
\begin{eqnarray*}
\Li^-_{y_{s_1}\ldots y_{s_r}}
&=&\sum_{k_1=0}^{s_1}\sum_{k_2=0}^{s_1+s_2-k_1}\ldots\sum_{k_r=0}^{(s_1+\ldots+s_r)-\atop(k_1+\ldots+k_{r-1})}
\binom{s_1}{k_1}\binom{s_1+s_2-k_1}{k_2}\ldots\\
&&\binom{s_1+\ldots+s_r-k_1-\ldots-k_{r-1}}{k_r}(\theta_0^{k_r}\lambda)(\theta_0^{k_2}\lambda)\ldots(\theta_0^{k_r}\lambda).
\end{eqnarray*}
\end{enumerate} 
\end{proof}
\goodbreak

\begin{example}
Since $\Li^-_{y_my_n}=(\theta_0^{m+1}\iota_1)\Li^-_{y_n}=\theta_0^m(\theta_0\iota_1)\Li^-_{y_n}$
and $(\theta_0\iota_1)\Li^-_{y_n}=\Li_0\Li^-_{y_n}$, one has $\Li^-_{y_my_n}=\theta_0^m[\Li_0\Li^-_{y_n}]
=\sum_{l=0}^m{m\choose l}\Li^-_{y_l}\Li^-_{y_{m+n-l}}$. For example, 
\begin{eqnarray*}
\Li^-_{y_1^2} (z)
&=&\Li_{y_0}(z)\Li^-_{y_2}(z)+(\Li^-_{y_1}(z))^2\\
&=&-(1-z)^{-1}+{5}(1-z)^{-2}-{7}(1-z)^{-3}+ {3}(1-z)^{-4},\cr
\Li^-_{y_2y_1}(z)&=&\Li_{y_0}(z)\Li^-_{y_3}(z) +3\Li^-_{y_1}(z)\Li^-_{y_2}(z)\cr
&=&(1-z)^{-1}-{11}(1-z)^{-2}+{31}(1-z)^{-3}-{33}(1-z)^{-4}+{12}(1-z)^{-5},\cr
\Li^-_{y_1y_2}(z)&=&\Li_{y_0}(z)\Li^-_{y_3}(z)+\Li^-_{y_1}(z)\Li^-_{y_2}(z)\cr
&=&(1-z)^{-1}-{9}(1-z)^{-2}+{23}(1-z)^{-3}-{23}(1-z)^{-4}+{8}(1-z)^{-5}.
\end{eqnarray*}
\end{example}

\subsection{Asymptotics of harmonic sums and singular expansion of polylogarithms}\label{asymptotic}

\begin{proposition}[\cite{GHMposter}]\label{P3}
Let $w\in Y^*_0$. There are non-zero constants $C^-_w$ and $B^-_w$ such that\footnote{
This means $\lim_{N\rightarrow+\infty}N^{-((w)+|w|)}\H^-_w(N)=C^-_w$ and $\lim_{z\rightarrow+1}(1-z)^{(w)+|w|}\Li^-_w(z)=B^-_w$.} 
$\H^-_w(N){}_{\widetilde{N\rightarrow\infty}}N^{\left(w\right)+|w|}C^-_{w}$ and $\Li^-_w(z){}_{\widetilde{z\rightarrow1}}B^-_w(1-z)^{-((w)+|w|)}$.

Moreover, $C^-_{1_{Y^*_0}}=B^-_{1_{Y^*_0}}=1$ and, for any $w\in Y_0^+$,
\begin{eqnarray*}
C^-_w=\prod_{w=uv, v\neq1_{Y^*}} ((v)+|v|)^{-1}\in\Q&\mbox{and}&B^-_w=((w)+\abs{w})!C^-_w\in\N_+.
\end{eqnarray*}
\end{proposition}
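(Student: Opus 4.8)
The plan is to exploit the recursive and multiplicative structures already established in Theorems \ref{degreeofHarmonic} and \ref{pr2}, together with the explicit closed forms for $\Li^-_{y_n}$ in \eqref{Euler}. First I would handle the one-letter case $w=y_n$. From $\H^-_{y_n}(N)=\frac1{n+1}\sum_{k=0}^n\binom{n+1}{k}b_k(N+1)^{n+1-k}$, the leading term (in $N$) comes only from $k=0$, giving $\H^-_{y_n}(N)\sim N^{n+1}/(n+1)$, so $C^-_{y_n}=1/(n+1)=((y_n)+|y_n|)^{-1}$, matching the claimed product (which has the single factor $v=w$). Dually, from \eqref{Euler}, $\Li^-_{y_n}(z)$ has a pole of order $n+1$ at $z=1$, and the coefficient of $(1-z)^{-(n+1)}$ is read off from the $k=0$ term: $\sum_{j=0}^{n-1}A_{n,j}\binom{j+1}{0}=\sum_j A_{n,j}=A_n(1)=n!$; equivalently from the second line with $t=n+1$ one gets $n!\,S_2(n+1,n+1)=n!$. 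Hence $B^-_{y_n}=n!=((y_n)+|y_n|)!\,C^-_{y_n}$. This anchors both induction bases, and $C^-_{1}=B^-_{1}=1$ is immediate since $\H^-_{1}=1$ and $\Li^-_{1}=1$.

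Next I would run an induction on length $r=|w|$ using the recursions. For the harmonic sums, write $w=y_sw'$ with $w'\in Y_0^*$. Using $\H^-_{y_sw'}(N)=\sum_{n=1}^N n^s\H^-_{w'}(n-1)$ (as in the proof of Theorem \ref{degreeofHarmonic}), the inductive hypothesis $\H^-_{w'}(n)\sim C^-_{w'}n^{(w')+|w'|}$ gives $\H^-_{y_sw'}(N)\sim C^-_{w'}\sum_{n=1}^N n^{s+(w')+|w'|}\sim C^-_{w'}\,N^{s+1+(w')+|w'|}/(s+1+(w')+|w'|)$. Since $(w)+|w|=s+1+(w')+|w'|$, this yields the recursion $C^-_{w}=C^-_{w'}/((w)+|w|)$, which unwinds exactly to $C^-_w=\prod_{w=uv,\,v\neq 1}((v)+|v|)^{-1}$ (each proper right factor $v$ of $w$ contributes its factor once, the factor for $v=w$ being the last one produced). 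For the polylogarithm side I would use part (1) of Theorem \ref{pr2}: for $r>1$, $\Li^-_{y_{s_1}\ldots y_{s_r}}=\sum_{t=0}^{s_1}\binom{s_1}{t}\Li^-_{y_t}\Li^-_{y_{s_1+s_2-t}y_{s_3}\ldots y_{s_r}}$. Each summand is a product of two functions with known pole orders; the dominant term near $z=1$ comes from maximizing the total pole order, i.e. from $t=s_1$, giving pole order $(s_1+1)+((s_1+s_2)+s_3+\ldots+s_r+(r-1))=(w)+|w|$, and all other terms have strictly smaller order. Thus $B^-_w=\binom{s_1}{s_1}B^-_{y_{s_1}}\,B^-_{y_{s_1+s_2}y_{s_3}\ldots y_{s_r}}=s_1!\,B^-_{w''}$ where $w''=y_{s_1+s_2}y_{s_3}\ldots y_{s_r}$ has length $r-1$; combined with the inductive identity $B^-_{w''}=((w'')+|w''|)!\,C^-_{w''}$ and the analogous relation $C^-_w=C^-_{w''}/((w)+|w|)\cdot(\text{lower-order check})$ — more cleanly, one compares the two recursions directly — one gets $B^-_w=((w)+|w|)!\,C^-_w$. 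Finally, positivity ($C^-_w\in\Q_{>0}$, hence $B^-_w\in\N_{>0}$) follows because every factor $((v)+|v|)$ is a positive integer.

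The main obstacle is bookkeeping the two different recursive shapes and making sure they produce the same product formula. The harmonic recursion peels off the leftmost letter one at a time (length drops by $1$, weight drops by $s_1$), so it is transparently compatible with the product $\prod_{w=uv}((v)+|v|)^{-1}$. The polylog recursion of Theorem \ref{pr2}(1), however, fuses $y_{s_1}$ and $y_{s_2}$ into $y_{s_1+s_2}$, so the right factors of $w''$ are not literally right factors of $w$; I would need to check that $\prod_{w''=u'v'}((v')+|v'|)^{-1}$ times $1/((w)+|w|)$ still equals $\prod_{w=uv}((v)+|v|)^{-1}$, which works because $(w)+|w|=(w'')+|w''|+1$ and for every proper right factor $v$ of $w$ (i.e. $v=y_{s_k}\ldots y_{s_r}$ with $k\ge 2$) we have $(v)+|v|$ equal to $(v')+|v'|$ for the corresponding right factor $v'$ of $w''$ when $k\ge 3$, while the case $k=2$, $v=y_{s_2}\ldots y_{s_r}$, is absorbed by the $B^-_{w''}=((w'')+|w''|)!\,C^-_{w''}$ step rather than appearing as a right factor of $w''$. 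A cleaner alternative, which I would actually write up, is to avoid re-deriving $C^-_w$ from the polylog recursion altogether: establish the product formula for $C^-_w$ purely from the harmonic recursion, establish $B^-_w/C^-_w=((w)+|w|)!$ by induction from the polylog recursion using $B^-_{y_n}/C^-_{y_n}=(n+1)!$ and $B^-_w/C^-_w = s_1!\cdot B^-_{w''}/C^-_{w''}\cdot((w)+|w|)$ — wait, this needs $C^-_w=C^-_{w''}/((w)+|w|)$, which does hold since $(w'')+|w''|+1=(w)+|w|$ forces the leading coefficients to scale that way once one knows $\Li^-$ and $\H^-$ have matching singular/asymptotic leading behavior — and then conclude. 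Either way the analytic input (pole order of a product is the sum of pole orders, leading coefficient is the product of leading coefficients) is elementary; the only real care needed is the combinatorial identity reconciling the two recursions, which I expect to occupy most of the written proof.
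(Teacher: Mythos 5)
Your treatment of the harmonic-sum side is correct and is essentially the paper's own argument: the paper likewise peels off the leftmost letter, notes that $\H^-_{y_sw'}$ solves the difference equation $f(N+1)-f(N)=(N+1)^s\H^-_{w'}(N)$, and invokes Lemma \ref{solutions_diff} to obtain the leading term $C^-_{w'}N^{(y_sw')+|y_sw'|}/((y_sw')+|y_sw'|)$, which unwinds to the product formula for $C^-_w$. Your base cases and the positivity remark are also fine.

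The polylogarithm side, however, contains a genuine error. In the recursion of Theorem \ref{pr2}(1), the term indexed by $t$ is $\binom{s_1}{t}\Li^-_{y_t}\Li^-_{y_{s_1+s_2-t}y_{s_3}\ldots y_{s_r}}$, whose pole order at $z=1$ is $(t+1)+\bigl((s_1+s_2-t)+s_3+\cdots+s_r+(r-1)\bigr)=(w)+|w|$, \emph{independently of $t$}. There is no dominant summand: every term contributes to the leading coefficient, so the correct recursion is $B^-_w=\sum_{t=0}^{s_1}\binom{s_1}{t}B^-_{y_t}B^-_{y_{s_1+s_2-t}y_{s_3}\ldots y_{s_r}}$ rather than your $B^-_w=s_1!\,B^-_{w''}$. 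Concretely, for $w=y_1y_2$ your formula gives $1!\cdot B^-_{y_3}=6$, while the true value is $\binom{1}{0}B^-_{y_0}B^-_{y_3}+\binom{1}{1}B^-_{y_1}B^-_{y_2}=6+2=8=5!\,C^-_{y_1y_2}$, in agreement with the paper's table. The fallback relation $C^-_w=C^-_{w''}/((w)+|w|)$ in your ``cleaner alternative'' fails on the same example ($\frac14\cdot\frac15=\frac1{20}\neq\frac1{15}$). The clean route to $B^-_w=((w)+|w|)!\,C^-_w$ is not the polylog recursion at all but the generating-series link $\Li^-_w(z)/(1-z)=\sum_{N\ge0}\H^-_w(N)z^N$ (used by the paper in the proof of Theorem \ref{Tonghop}): writing the leading part of $\H^-_w(N)\sim C^-_wN^p$, $p=(w)+|w|$, as $p!\,C^-_w\binom{N}{p}+\cdots$ and using $\sum_{N}\binom{N}{p}z^N=z^p(1-z)^{-p-1}$ gives $\Li^-_w(z)\sim p!\,C^-_w(1-z)^{-p}$, which is what the paper's terse ``immediate by Theorem \ref{pr2}'' stands in for.
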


\begin{proof}
By Theorem \ref{degreeofHarmonic}, $\H^-_w$ is a polynomial function of degree $(w)+\abs{w}$ then such
$C^-_w$ exists. By induction, it is immediate that $C^-_{1_{Y^*_0}}=1$ and $C^-_{y_s}=(s+1)^{-1}$,
{\it i.e.} the result holds for $|w|\le1$. Suppose that it holds up to $|w|=k$ and for the next,
let $y_s\in Y_0$ and $w\in Y^*_0$ such that $|w|=k$. Since $\H^-_{y_sw}$ is solution of $f(N+1)-f(N)=(N+1)^s\H^-_w(N)$,
with initial condition $\H^-_w(0)=0$, then we get the leading term of $\H^-_w$.
Remark that $(N+1)^s\H^-_{w}(N)\not\equiv0$ is polynomial with $C^-_wN^{s+(w)+|w|}$ as leading term. 
Hence, by Lemma \ref{solutions_diff}, the leading term of $\H^-_{y_sw}$ is $C^-_wN^{(y_sw)+|y_sw|}/((y_sw)+|y_sw|)$.
It follows from this the expression of $C^-_w$. The second result is immediate by Theorem \ref{pr2}.
\end{proof}

\begin{example}[of $C^-_w$ and $B^-_w$]
$$\begin{tabular}{|c|c|c|c|c|c|}
\hline
$w$& $C^-_w$ & $B^-_w$ & $w$& $C^-_w$ & $B^-_w$\\ 
\hline
$y_0$ &$1$ & $1$ & $y_1y_2$ & $\frac{1}{15}$ & $8$\\ 
 \hline
$y_1$ & $\frac{1}{2}$ & $1$ & $y_2y_3$ & $\frac{1}{28}$ & $180$\\ 
 \hline
$y_2$ &$\frac{1}{3}$ & $2$ & $y_3y_4$ & $\frac{1}{49}$ & $8064$\\ 
 \hline
$y_n$ & $\frac{1}{n+1}$ & $n!$ & $y_my_n$ & $\frac{1}{(n+1)(m+n+2)}$ & $\frac{(m+n+1)!}{(n+1)}$\\ 
 \hline
$y_0^2$ &$\frac{1}{2}$ & $1$ & $y_2y_2y_3$ & $\frac{1}{280}$ & $12960$\\ 
 \hline
$y_0^n$ &$\frac{1}{n!}$ & $1$ & $y_2y_{10}y_1^2$ & $\frac{1}{2160}$ & $9686476800$\\ 
 \hline
$y_1^2$ & $\frac{1}{8}$ & $3$ & $y_2^2y_4y_3y_{11}$ & $\frac{1}{2612736}$ & $4167611825465088000000$\\ 
 \hline
\end{tabular}$$
\end{example}

\begin{definition}\label{C}
Let us define the two following non commutative generating series \footnote{using the Kleene star of series without constant term $S^* = (1-S)^{-1}$.}
\begin{eqnarray*}
C^-:=\sum_{w\in Y_0^*}C^-_w\;w&\mbox{and}&\Theta(t):=\sum_{w\in Y_0^*}t^{|w|+(w)}w=\biggl(\Sum_{y\in Y_0}t^{(y)+1}y\biggr)^*,\\
B^-:=\sum_{w\in Y_0^*}B^-_w\;w&\mbox{and}&\Lambda(t):=\sum_{w\in Y_0^*}{(|w|+(w))!}{ t^{|w|+(w)}}w.
\end{eqnarray*}
\end{definition}
By Theorems \ref{degreeofHarmonic}, \ref{pr2} and, for any $w\in Y^*_0$, denoting $p=(w)+|w|$, one also has
\begin{eqnarray*}
\Theta(N)&=&1_{Y^*_0}+ \Sum_{w \in Y^+_0}\biggl[\Sum_{j=0}^{p-1}(-1)^{p+j-1}\binom{p}{j}\H^-_{y_j}(N)\biggr]w,\cr
\Lambda((1-z)^{-1})&=&1_{Y^*_0}+ (\Li^-_{y_0}(z)-1)y_0\\ 
&+&\Sum_{w\in Y^+_0 \setminus \{y_0\}}\biggl[(-1)^{p+1}(\Li^-_{y_0}(z)-1)+\Sum_{j=2}^p\Frac{(-1)^{p+j}S_1(p,j)}{(p-1)!}\Li^-_{y_{j-1}}(z)\biggr]w.
\end{eqnarray*}

By Definition \ref{C} and Proposition \ref{P3}, we get
\begin{theorem}\label{series}
Let us consider now the following noncommutative generating series
\begin{eqnarray*}
\H^-:=\sum_{w\in Y_0^*}\H^-_w\;w&\mbox{and}&\L^-:=\sum_{w\in Y_0^*}\Li^-_w\;w.
\end{eqnarray*}
Then\footnote{{\it i.e.},
$\H^-(N)\;{}_{\widetilde{N\rightarrow+\infty}}\;C^-\odot\Theta(N)$ and $\L^-(z)\;{}_{\widetilde{z\rightarrow1}}\;C^-\odot\Lambda((1-z)^{-1})$
where $\Theta^{\odot-1}$ and $\Lambda^{\odot-1}$ denote the inverses for Hadamard product of $\Theta$ and $\Lambda$ respectively
(to be compared with \eqref{Abel}). The equivalences are understood term by term.}
$\lim_{N \to+\infty}\Theta^{\odot -1}(N)\odot\H^-(N)=\lim_{z \to 1}\Lambda^{\odot -1}((1-z)^{-1})\odot\L^-(z)=C^-$.
\end{theorem}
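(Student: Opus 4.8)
The plan is to establish the two asymptotic equivalences separately and term-by-term, since by Definition \ref{C} and Proposition \ref{P3} the claim is exactly the coefficientwise statement that, for each fixed $w\in Y_0^*$,
\[
\lim_{N\to+\infty}\frac{\H^-_w(N)}{N^{(w)+|w|}}=C^-_w
\quad\text{and}\quad
\lim_{z\to1}(1-z)^{(w)+|w|}\Li^-_w(z)=B^-_w=((w)+|w|)!\,C^-_w .
\]
First I would unwind what the operators $\Theta^{\odot-1}$ and $\Lambda^{\odot-1}$ do on coefficients: since $\Theta(N)=\sum_w N^{(w)+|w|}w$ (up to lower-order polynomial corrections, as displayed just before the theorem), the Hadamard inverse acts on the coefficient of a fixed $w$ asymptotically as division by $N^{(w)+|w|}$; similarly $\Lambda((1-z)^{-1})$ has coefficient $((w)+|w|)!\,(1-z)^{-((w)+|w|)}$ on $w$, so its Hadamard inverse multiplies the coefficient of $w$ by $(1-z)^{(w)+|w|}/((w)+|w|)!\,$. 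Thus the left-hand limit in the theorem has $w$-coefficient $\lim_N N^{-((w)+|w|)}\H^-_w(N)=C^-_w$ and the middle one has $w$-coefficient $\lim_z(1-z)^{(w)+|w|}\Li^-_w(z)/((w)+|w|)!=B^-_w/((w)+|w|)!=C^-_w$ — giving $C^-$ on both sides.

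The substance therefore reduces to Proposition \ref{P3}, which is already proved in the excerpt, together with a careful justification that the Hadamard-inverse series $\Theta^{\odot-1}$ and $\Lambda^{\odot-1}$ genuinely exist and that taking the Hadamard product commutes with the termwise limit. The second point is the only real subtlety: both $\Theta(N)$ and $\Lambda((1-z)^{-1})$ have every coefficient invertible (the coefficient of $1_{Y_0^*}$ is $1$, and the coefficient of $w\ne1_{Y_0^*}$ is, respectively, a nonzero polynomial in $N$ for $N$ large or a nonzero polynomial in $(1-z)^{-1}$ for $z$ near $1$), so the Hadamard inverses are well-defined formal series in $Y_0^*$ with coefficients that are rational functions of $N$ (resp.\ of $z$) on a punctured neighbourhood of $+\infty$ (resp.\ of $1$). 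Because the Hadamard product on $\serie{\calC}{Y_0}$ is computed coefficientwise on each word independently, no issue of summability arises, and the equivalence "$\,\sim\,$" in the statement is by definition the assertion that each word-coefficient of $\Theta^{\odot-1}(N)\odot\H^-(N)$ tends to the corresponding coefficient of $C^-$, which is precisely what Proposition \ref{P3} delivers.

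I would organise the write-up as follows: (i) record that $\Theta(N)$ and $\Lambda((1-z)^{-1})$ are group-like-style series with all coefficients invertible near the relevant limit point, so $\Theta^{\odot-1},\Lambda^{\odot-1}$ exist; (ii) compute, for a fixed $w$, the $w$-coefficient of $\Theta^{\odot-1}(N)\odot\H^-(N)$ as $\H^-_w(N)$ divided by the polynomial $\sum_{j=0}^{p-1}(-1)^{p+j-1}\binom{p}{j}\H^-_{y_j}(N)$ (with $p=(w)+|w|$), whose leading term is $N^{p}$ by Theorem \ref{degreeofHarmonic}(2), and likewise for $\L^-$; (iii) apply Proposition \ref{P3} to pass to the limit in each coefficient, obtaining $C^-_w$ in both cases. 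The only mildly delicate step is (ii)–(iii): one must check that dividing the degree-$p$ polynomial $\H^-_w(N)$ by another degree-$p$ polynomial with leading coefficient $1$ yields a function tending to the leading coefficient of $\H^-_w$, which is $C^-_w$ by Proposition \ref{P3} — this is elementary but should be stated, since it is where the normalisation "$((w)+|w|)!$" in $B^-_w$ versus $C^-_w$ gets absorbed by the factorial weights in $\Lambda$. With those routine verifications in place the theorem follows immediately, and no genuinely new estimate beyond Proposition \ref{P3} is needed; the main obstacle is purely bookkeeping, namely making the Hadamard-inverse formalism precise enough that "term by term" is unambiguous.
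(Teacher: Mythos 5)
Your proposal is correct and follows essentially the same route as the paper, which derives the theorem directly from Definition \ref{C} and Proposition \ref{P3} as a term-by-term (coefficientwise) restatement; your unwinding of the Hadamard inverses and the normalisation $B^-_w=((w)+|w|)!\,C^-_w$ is exactly the intended bookkeeping. One minor imprecision: the $w$-coefficient of $\Theta(N)$ is \emph{exactly} $N^{(w)+|w|}$ (and that of $\Lambda((1-z)^{-1})$ exactly $((w)+|w|)!\,(1-z)^{-((w)+|w|)}$), so the Hadamard inverse divides by it exactly, not merely asymptotically — the displayed formulas before the theorem are alternative expressions of these monomials in the bases $\{\H^-_{y_j}\}$ and $\{\Li^-_{y_j}\}$, not corrections.
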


\begin{definition}
Let $P \in\Q\left\langle{Y_0}\right\rangle$ such that $\H^-_P\not\equiv 0$. Let $B^-_P, C^-_P\in\Q$ and $n(P)\in\N$
be defined respectively by $\Li^-_P(z){}_{\widetilde{z\rightarrow1}}(1-z)^{-n(P)} B^-_P$ and $\H^-_P(N){}_{\widetilde{N\rightarrow\infty}}N^{n(P)}C^-_P$.
\end{definition}

It is immediate that\footnote{The support of $P=\sum_{u \in Y^*_0}x_uu\in\Q\left\langle{Y_0}\right\rangle$ is defined by
$\mathrm{supp}P=\{w\in Y_0^*|\scal{P}{w}\neq 0\}$ and the scalar product, on $\Q\left\langle{Y_0}\right\rangle$,
is defined by $\scal{P}{w}=x_w$.} $0\le n(P)\le\max_{u\in\mathrm{supp}P}\{(u)+\abs{u}\}$.

Let $p_{\max}:=\max_{u\in\mathrm{supp}P}\{(u)+|u|\}<+\infty$.
We are extending\footnote{This process, over $\Q\left\langle{Y_0}\right\rangle$, ends after a finite number of steps.}
$C^-_{\bullet}$ over $\Q\left\langle{Y_0}\right\rangle$~:
\begin{itemize}
\item Step 1: Compute $C_0=\sum_{(w)+|w|=p_{\max}}\scal{P}{w}C^-_w$. If $C_0\neq 0$ then $C^-_P=C_0$ else go to next step.
\begin{example}
For $P=6y_4y_2+12y_3^2-9y_5$, $(y_4y_2)+\abs{y_4y_2}=(y_3^2)+\abs{y_3^2}=8>6=(y_5)+|y_5|$.
Then $p_{\max}=8,C_0=6C^-_{y_4y_2}+12C^-_{y_3^2}=\frac{5}{8}$ and $C^-_P=C_0={5}/{8}$.
\end{example}

\item Step 2: Let $p:=p_{\max}-1$ and $J_1:=\{v\in\mathrm{supp}P|(v)+\abs{v}=p\}$. Compute $C_1=\sum_{c\in J_1}\alpha_cC^-_c
\scal{\H^-_{\sum_{c \in J}\alpha_cc}}{N^p}$. If $C_1\neq0$ then $C_0:=C_1$ else go on with $p-1$.
\begin{example}
$P=12y_2y_1^4 - y_2y_{3}^2 - 9y_4^2$,
$(y_1^4y_2)+\abs{y_1^4y_2}=(y_2y_3^2)+\abs{y_2y_3^2}=11>10=(y_4^2)+\abs{y_4^2}$.
Thus, $p_{\max}=11,C_0=12C^-_{y_2y_1^4}-C^-_{y_2y_3^2}=0$. Next step. One has
\begin{eqnarray*}
\H^-_{y_2y_1^4}(N)&=&\frac{1}{4224}N^{11}-\frac{181}{134400}N^{10}+\frac{31}{80640}N^9+\frac{43}{5376}N^8-\frac{1}{168}N^7\\
&-&\frac{1087}{57600}N^6+\frac{47}{3840}N^5+\frac{323}{16128}N^4-\frac{1}{126}N^3-\frac{787}{100800}N^2+\frac{19}{18480}N,\cr
\H^-_{y_2y_3^2}(N)&=&\frac{1}{352}N^{11}-\frac{9}{2240}N^{10}-\frac{95}{4032}N^9+\frac{11}{448}N^8+\frac{13}{168}N^7-\frac{149}{2880}N^6\cr
&-&\frac{37}{320}N^5+\frac{173}{4032}N^4+\frac{71}{1008}N^3-\frac{59}{5040}N^2-\frac{53}{4620}N.
\end{eqnarray*}
Hence, $C_1=-\Frac{2172}{134400}+\Frac{9}{2240}-9C^-_{y_4^2}=-\Frac{269}{1400}\ne0$ and then $C^-_P=-\Frac{269}{1400}$.
\end{example}

\item Suppose, in the $r^{th}-$ step, that the linear combination of
coefficients of $N^{p_{\max}-r+1}$ in $\H^-_w$
with $w\in\{u\in\mathrm{supp}P|(u)+\abs{u}\ge p_{\max}-r+1\}$
 is zero, namely $C_{r-1}=0$,
then in $(r+1)^{th}-$ step to compute the linear combination of coefficients of $N^{p_{\max}-r}$ in $\H^-_w$
with $w\in\{u\in\mathrm{supp}P|(u)+|u|\ge p_{\max}-r+1\}$,
and the linear combination of $C^-_w$ with $w\in\{u \in\mathrm{supp}P|(u)+\abs{u}=p_{\max}-r\}$.
If the sum of above results is not $0$ then it is $C^-_P$ else one jumps to next step.  
\end{itemize}

For $P=\sum_{w\in Y^*}c_ww$, the similar algorithm can be used to compute $B^-_P$~:
\begin{itemize}
\item Step 1: Let $J:=\{v\in\mathrm{supp}P|(v)+\abs{v}=p_{\max}\}$.
Compute $B^0=\sum_{c\in J }\alpha_cB^-_c$. If $B^0 \neq 0$, then $B^-_P=B^0$  else go to next step.
\item Step 2: Let $J_1=\{v\in\mathrm{supp}P|(v)+\abs{v}= p_{\max}-1\}$.
Find the coefficient of $(1-z)^{-p_{\max}+1}$ in $\Li^-_{\sum_{c\in J}\alpha_cc}$, namely $b_{p_{\max}-1}$,
and compute $B^1=b_{p_{\max}-1}+\sum_{c\in J_1} \alpha_cB^-_c$.
If $B^1\neq0$ then $B^-_P=B^1$ else continue with smaller orders.
\end{itemize}

\section{Structure of harmonic sums and polylogarithms}\label{structure}
\subsection{Structure of harmonic sums at non-positive multi-indices}
\begin{proposition}
Let $w_1,w_2\in Y^*_0$. Then $\H^-_{w_1}\H^-_{w_2}=\H^-_{w_1\stuffle w_2}.$
\end{proposition}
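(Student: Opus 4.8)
The plan is to prove the identity first for words, by induction on the total length $|w_1|+|w_2|$, and then to extend it to $\Q\langle Y_0\rangle$ by bilinearity (which is what is really needed for the map \eqref{maps} to be a morphism of algebras). The base case is when one of the two words equals $1_{Y_0^*}$: there $\H^-_{1_{Y_0^*}}\equiv1$ (the empty nested sum) and $w\stuffle 1_{Y_0^*}=w=1_{Y_0^*}\stuffle w$, so the statement is immediate. For the inductive step I would write $w_1=y_iu$ and $w_2=y_jv$ with $u,v\in Y_0^*$, and use the elementary recursion $\H^-_{y_sw}(N)=\sum_{k=1}^Nk^s\,\H^-_w(k-1)$ already exploited in the proof of Theorem \ref{degreeofHarmonic}.

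Applying that recursion to both factors,
$$\H^-_{w_1}(N)\,\H^-_{w_2}(N)=\sum_{k=1}^N\sum_{l=1}^Nk^i\,l^j\,\H^-_u(k-1)\,\H^-_v(l-1),$$
and I would split the double sum into the three regions $k>l$, $l>k$ and $k=l$. On $\{k>l\}$, performing the inner summation over $l\in\{1,\dots,k-1\}$ first rebuilds $\H^-_{w_2}(k-1)$, so this block equals $\sum_{k=1}^Nk^i\,\H^-_u(k-1)\,\H^-_{w_2}(k-1)$; since $|u|+|w_2|<|w_1|+|w_2|$ the induction hypothesis replaces $\H^-_u(k-1)\H^-_{w_2}(k-1)$ by $\H^-_{u\stuffle w_2}(k-1)$, and one last use of the recursion identifies the block with $\H^-_{y_i(u\stuffle w_2)}(N)$. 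By symmetry $\{l>k\}$ contributes $\H^-_{y_j(w_1\stuffle v)}(N)$, and the diagonal $\{k=l\}$ contributes $\sum_{k=1}^Nk^{i+j}\,\H^-_u(k-1)\,\H^-_v(k-1)=\H^-_{y_{i+j}(u\stuffle v)}(N)$, again by the induction hypothesis and the recursion.

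Adding the three pieces and using linearity of $w\mapsto\H^-_w$,
$$\H^-_{w_1}\H^-_{w_2}=\H^-_{y_i(u\stuffle w_2)}+\H^-_{y_j(w_1\stuffle v)}+\H^-_{y_{i+j}(u\stuffle v)}=\H^-_{y_iu\,\stuffle\,y_jv}=\H^-_{w_1\stuffle w_2},$$
the penultimate equality being exactly the defining recursion of $\stuffle$ on $Y_0^*$. The argument is purely formal; the only delicate points are bookkeeping ones — keeping the shift $k\mapsto k-1$ consistent so that $\H^-_{y_sw}(N)=\sum_{k=1}^Nk^s\H^-_w(k-1)$ is invoked with the right bounds (including the degenerate instance $w=1_{Y_0^*}$, where it reads $\H^-_{y_s}(N)=\sum_{k=1}^Nk^s$), and checking that the three index regions genuinely partition $\{1,\dots,N\}^2$ without overlap. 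I do not foresee any obstacle beyond this routine bookkeeping.
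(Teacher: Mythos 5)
Your argument is correct, but it is not the route the paper takes. The paper's proof identifies $\H^-_{y_{s_1}\ldots y_{s_r}}(N)$ with the specialization of the quasi-symmetric monomial function $M_w$ at $t_i=i$ for $1\le i\le N$ and $t_i=0$ for $i>N$, and then simply invokes the known identity $M_uM_v=M_{u\stuffle v}$; the multiplicativity is thus imported wholesale from the theory of quasi-symmetric functions. You instead prove the identity from scratch by induction on $|w_1|+|w_2|$, using the recursion $\H^-_{y_sw}(N)=\sum_{k=1}^Nk^s\H^-_w(k-1)$ and the partition of $\{1,\dots,N\}^2$ into the regions $k>l$, $l>k$, $k=l$, whose three contributions match exactly the three terms of the recursive definition of $\stuffle$. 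This is in essence the standard proof of $M_uM_v=M_{u\stuffle v}$ transported to the specialized (truncated) setting, so the two arguments are morally the same computation; what yours buys is self-containedness and the fact that it works directly with the finite sums over $\{1,\dots,N\}$, which sidesteps the mild delicacy of the paper's specialization argument when letters $y_0$ occur (where $t_i^0=1$ even at $t_i=0$, so the truncation of the alphabet must be handled with a little care). What the paper's approach buys is brevity and a conceptual placement of $\H^-_\bullet$ inside the quasi-shuffle/QSym formalism it uses elsewhere. One small point to make explicit in your write-up: the induction hypothesis is stated for pairs of words, but you apply $\H^-$ to the linear combinations $u\stuffle w_2$, $w_1\stuffle v$, $u\stuffle v$, so you should say once that $w\mapsto\H^-_w$ is extended linearly to $\Q\langle Y_0\rangle$ before invoking the recursion on those combinations; with that said, the bookkeeping is indeed routine and the proof is complete.
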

\begin{proof}
Associating $(s_1,\ldots,s_k)$ to $w$, the quasi-symmetric monomial function on the commuting variables $t=\{t_i \}_{i\ge1}$ is defined by
\begin{eqnarray*}
M_{1_{Y^*}}(t)=1&\mbox{and}&M_w(t)=\sum_{n_1>\ldots>n_k> 0} t^{s_1}_{n_1}\ldots t^{s_k}_{n_k}.
\end{eqnarray*}
For any $u,v \in Y^*_0$, since $M_uM_v=M_{u\stuffle v}$ then $\H^-_{s_1,\ldots,s_k}$ is obtained by specializing, in $M_w$,
$t=\{t_i \}_{i\ge1}$ at $t_i=i$ if $1\le i \le N$ and else at 
$t_i = 0$ for $i \geq N+1$ for $i \geq N+1$.
\end{proof}

By the $\stuffle$-extended Friedrichs criterion \cite{acta,VJM}, we get
\begin{theorem}\label{onto0}
\begin{enumerate}
\item The generating series $\H^-$ is group-like and $\log\H^-$ is primitive.
\item $\ker\H^-_{\bullet}$ is a prime ideal of $(\Q \left\langle{Y_0}\right\rangle,\stuffle)$,
{\it i.e.} $\ncp{\Q}{Y_0}\setminus\ker\H^-_{\bullet}$ is closed by $\stuffle$.
\end{enumerate}
\end{theorem}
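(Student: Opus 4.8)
The plan is to prove the two assertions of Theorem~\ref{onto0} in order, exploiting the multiplicativity $\H^-_{w_1}\H^-_{w_2}=\H^-_{w_1\stuffle w_2}$ just established, together with the fact that the $\H^-_w$ live in the integral domain $\Q[N]$. First I would recall that $\calH_{\ministuffle}^{\vee}=(\Q\langle Y_0\rangle,\stuffle,1_{Y_0^*},\Delta_{\tt conc},\epsilon)$ is a commutative Hopf algebra, so the set of its group-like (resp.\ primitive) elements, taken in the completion, is a group (resp.\ a Lie algebra); to identify $\H^-$ as group-like it suffices to check the defining identity $\Delta_{\tt conc}(\H^-)=\H^-\otimes\H^-$, which unwinds to $\scal{\H^-}{u\stuffle v}=\scal{\H^-}{u}\scal{\H^-}{v}$ for all $u,v\in Y_0^*$, i.e.\ exactly $\H^-_{u\stuffle v}=\H^-_u\H^-_v$. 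Since a diagonal series that is group-like has a primitive logarithm in any commutative Hopf algebra over a $\Q$-algebra (the classical correspondence, cf.\ the $\stuffle$-extended Friedrichs criterion of \cite{acta,VJM}), part~(1) follows. One should be a little careful that $\H^-$ is a series in $\langle Y_0\rangle$ with coefficients in $\Q[N]$ rather than in a field, but this is harmless: the identities are polynomial in $N$ and the Hopf-algebraic manipulations take place over the base ring $\Q$ of the word algebra, with $N$ merely a parameter.

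For part~(2) the key observation is that the map $w\mapsto\H^-_w$ extends to an algebra morphism $\H^-_\bullet\colon(\Q\langle Y_0\rangle,\stuffle)\to\Q[N]$, and $\Q[N]$ is an integral domain. Hence $\ker\H^-_\bullet$ is the kernel of a ring homomorphism into a domain, so it is a prime ideal: if $P\stuffle Q\in\ker\H^-_\bullet$ then $\H^-_P\,\H^-_Q=\H^-_{P\stuffle Q}\equiv 0$ in $\Q[N]$, whence $\H^-_P\equiv 0$ or $\H^-_Q\equiv 0$, i.e.\ $P\in\ker\H^-_\bullet$ or $Q\in\ker\H^-_\bullet$. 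That it is a (two-sided, hence ordinary, since $\stuffle$ is commutative) ideal is automatic from it being a kernel. The complementary reformulation ``$\Q\langle Y_0\rangle\setminus\ker\H^-_\bullet$ is closed under $\stuffle$'' is literally the contrapositive of primality.

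The only genuine subtlety, and the step I expect to need the most care, is checking that $\H^-_\bullet$ is really \emph{well defined} as a map on the abstract word algebra, i.e.\ that the assignment $w\mapsto\H^-_w$ respects no hidden relations other than those we intend — but here there is nothing to check, since $\Q\langle Y_0\rangle$ is \emph{free} as a $\Q$-module on $Y_0^*$, so one simply extends $w\mapsto\H^-_w$ linearly and then the preceding Proposition says this linear map is multiplicative for $\stuffle$. A second minor point is to make sure the target is taken to be $\Q[N]$ (polynomial functions of $N$), which is legitimate precisely because Theorem~\ref{degreeofHarmonic} guarantees each $\H^-_w$ is a polynomial in $N$; were the target only the ring of functions $\N\to\Q$, it would not be a domain and the argument would fail. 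With these identifications in place both parts are short: part~(1) is the group-like/primitive dictionary applied to a commutative Hopf algebra over $\Q$, and part~(2) is ``kernel into a domain is prime.''
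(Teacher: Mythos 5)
Your overall strategy is the paper's own: both assertions are deduced from the multiplicativity $\H^-_u\H^-_v=\H^-_{u\stuffle v}$ of the preceding proposition via the $\stuffle$-extended Friedrichs criterion, and your part (2) --- the kernel of an algebra morphism $(\ncp{\Q}{Y_0},\stuffle)\to\Q[N]$ into an integral domain is prime, with the complement-closed-under-$\stuffle$ reformulation being its contrapositive --- is exactly the intended argument. Your remark that the target must be taken to be $\Q[N]$ (legitimate by Theorem \ref{degreeofHarmonic}) rather than the ring of all functions $\N\to\Q$, which is not a domain, is a genuine and worthwhile precision.

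The one thing you must fix is the coproduct in part (1). You propose to verify $\Delta_{\tt conc}(\H^-)=\H^-\otimes\H^-$ and assert that this ``unwinds'' to $\scal{\H^-}{u\stuffle v}=\scal{\H^-}{u}\scal{\H^-}{v}$. It does not: $\Delta_{\tt conc}$ is deconcatenation, so $\Delta_{\tt conc}(\H^-)=\H^-\otimes\H^-$ unwinds to $\scal{\H^-}{uv}=\scal{\H^-}{u}\scal{\H^-}{v}$, i.e.\ multiplicativity with respect to \emph{concatenation}, which is false here (e.g.\ $\H^-_{y_0y_0}(N)=\binom{N}{2}\neq N^2=(\H^-_{y_0}(N))^2$). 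The identity you actually need is group-likeness for $\Delta_{\stuffle}$, i.e.\ $\Delta_{\stuffle}(\H^-)=\H^-\otimes\H^-$ read in the completion of $\calH_{\ministuffle}=(\Q\left\langle{Y_0}\right\rangle,{\tt conc},1_{Y_0^*},\Delta_{\stuffle},\epsilon)$; by the adjunction $\scal{\Delta_{\stuffle}(w)}{u\otimes v}=\scal{w}{u\stuffle v}$ this is precisely the character property $\H^-_{u\stuffle v}=\H^-_u\H^-_v$ together with $\H^-_{1_{Y_0^*}}=1$, which is what the Friedrichs criterion requires, and then $\log\H^-$ is primitive for $\Delta_{\stuffle}$ by the usual exp/log correspondence over a commutative $\Q$-algebra. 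With that substitution ($\Delta_{\stuffle}$ in place of $\Delta_{\tt conc}$, group-likeness read in $\calH_{\ministuffle}$ rather than $\calH_{\ministuffle}^{\vee}$) your proof is complete and coincides with the paper's.
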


\begin{definition}\label{blunt cones}
For any $n\in\N_+$, let
$\mathbb{P}_n:=\mathrm{span}_{\R_+}\{w\in Y_0^*|(w)+|w|=n\}\setminus\{0\} \subset \ncp{\R_+}{Y_0}$
be the blunt convex cone ({\it i.e.} without zero or see Appendix A.) generated by the set $\{w\in Y_0^*|(w)+|w|=n\}$.
\end{definition}

By definition, $C^-_{\bullet}$ is linear on the set $\mathbb{P}_n$. For any $u, v \in Y^*_0$, one has $u\stuffle v=u\shuffle v+\sum_{(w)+|w|<(u)+|u|+|v|+(v)}x_ww$ and the $x_w$'s are positive. Moreover, for any $w$ which belongs to the support of 
$\sum_{(w)+|w|<(u)+|u|+|v|+(v)}x_ww$,  
one has $(w)+|w| < (u)+(v)+|u|+|v|$. Thus, by the definition of $C^-_{\bullet}$, one obtains
\begin{corollary}\label{tac}
\begin{enumerate}
\item Let $w,v \in Y^*_0$. Then $C^-_wC^-_v=C^-_{w\shuffle v }=C^-_{w\stuffle v }$.
\item For any $P, Q \notin\ker\H^-_{\bullet}$, $C^-_PC^-_Q=  C^-_{P \stuffle Q }$ and
$\Q\left\langle{Y_0}\right\rangle\setminus\ker\H^-_{\bullet}$ is a $\stuffle-$ multiplicative monoid containing $Y^*_0$.
\end{enumerate}
\end{corollary}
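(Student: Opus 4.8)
The plan is to reduce everything to two already-available facts: first, that $\H^-_\bullet$ is a morphism of algebras $(\ncp{\Q}{Y_0},\stuffle)\rightarrow(\Q\{\H^-_w\}_{w\in Y_0^*},\cdot)$, which is the content of the Proposition preceding Theorem~\ref{onto0} (an identity on words, extended by bilinearity of $\stuffle$ and of polynomial multiplication); and second, that by the Definition preceding this Corollary, $C^-_P$ is simply the leading coefficient of the polynomial $\H^-_P$ and $n(P)=\deg_N\H^-_P$, for every $P\notin\ker\H^-_\bullet$. Since $\Q[N]$ is an integral domain, leading coefficients of polynomials multiply, and this is exactly what the statement asks for.

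I would prove part~(2) first. Take $P,Q\notin\ker\H^-_\bullet$, so that $\H^-_P,\H^-_Q$ are nonzero polynomials with $\H^-_P(N)\,{}_{\widetilde{N\rightarrow\infty}}\,C^-_P N^{n(P)}$ and $\H^-_Q(N)\,{}_{\widetilde{N\rightarrow\infty}}\,C^-_Q N^{n(Q)}$, both leading coefficients being nonzero. The $\stuffle$-morphism property gives $\H^-_{P\stuffle Q}=\H^-_P\,\H^-_Q$, hence $\H^-_{P\stuffle Q}(N)\,{}_{\widetilde{N\rightarrow\infty}}\,C^-_PC^-_Q\,N^{n(P)+n(Q)}$. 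As $C^-_PC^-_Q\neq0$, the polynomial $\H^-_{P\stuffle Q}$ is not identically zero, so $P\stuffle Q\notin\ker\H^-_\bullet$, and comparing with the Definition yields $n(P\stuffle Q)=n(P)+n(Q)$ and $C^-_{P\stuffle Q}=C^-_PC^-_Q$. This shows $\Q\left\langle{Y_0}\right\rangle\setminus\ker\H^-_\bullet$ is stable under $\stuffle$; it contains the $\stuffle$-unit $1_{Y_0^*}$ (since $\H^-_{1_{Y_0^*}}=1$) and, by Proposition~\ref{P3}, every $w\in Y_0^*$ (since $C^-_w\neq0$), so it is a $\stuffle$-submonoid containing $Y_0^*$.

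For part~(1), specialising the above to $P=w$, $Q=v$ already gives $C^-_wC^-_v=C^-_{w\stuffle v}$, so only the equality $C^-_{w\stuffle v}=C^-_{w\shuffle v}$ remains. Set $n:=(w)+(v)+|w|+|v|$. In the decomposition $w\stuffle v=w\shuffle v+\sum_{z}x_z z$ recalled just before the Corollary, every word $z$ of the remainder satisfies $(z)+|z|<n$, whereas every word of $w\shuffle v$ has weight $(w)+(v)$ and length $|w|+|v|$, hence weight-plus-length exactly $n$. Thus $p_{\max}=n$ for the polynomial $w\stuffle v$, and Step~1 of the extension of $C^-_\bullet$ returns $\sum_{(z)+|z|=n}\scal{w\stuffle v}{z}C^-_z=\sum_{z}\scal{w\shuffle v}{z}C^-_z=C^-_{w\shuffle v}$, the last step using the linearity of $C^-_\bullet$ on $\mathbb{P}_n$; since this value equals $C^-_wC^-_v\neq0$, Step~1 does terminate, whence $C^-_{w\stuffle v}=C^-_{w\shuffle v}=C^-_wC^-_v$.

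The argument is essentially routine; the only point demanding care is the identification of the algorithmic $C^-_P$ with the genuine leading coefficient of $\H^-_P$, which rests on Proposition~\ref{P3} (each $\H^-_w$ is of degree $(w)+|w|$ with leading coefficient $C^-_w$) together with the weight-plus-length bound on the quasi-shuffle remainder. These same two ingredients are precisely what force the $\shuffle$- and $\stuffle$-versions of $C^-_\bullet$ to agree, and no genuine obstacle beyond this bookkeeping is expected.
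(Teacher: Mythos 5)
Your proposal is correct and follows essentially the same route as the paper: the paper's justification is precisely the paragraph preceding the Corollary (linearity of $C^-_{\bullet}$ on each $\mathbb{P}_n$, the decomposition $u\stuffle v=u\shuffle v+\sum x_ww$ with all remainder words of strictly smaller weight-plus-length, and the $\stuffle$-morphism property of $\H^-_{\bullet}$ which makes leading coefficients multiply). Your write-up merely makes explicit the integral-domain/leading-coefficient bookkeeping that the paper leaves implicit.
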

Let us prove $C^-_{\bullet}$ can be extended as a character  $C^-$, for $\shuffle$ or equivalently, by the Freidrichs' criterion \cite{reutenauer},
$C^-$ is group-like and then $\log{C}^-$ is primitive, for $\Delta_{\shuffle}$.
\begin{lemma}\label{HGM}
Let $\mathcal{A}$ is a $\R$-associative algebra with unit and
$f:\sqcup_{n\ge0}\mathbb{P}_{n}\longrightarrow\mathcal{A}$ (see Definition \ref{blunt cones}) such that 
\begin{enumerate}
\item For any $u, v \in Y^*_0$, $f(u \shuffle v)=f(u)f(v)$ and
 $f(1_{Y^*_0})=1_{\mathcal{A}}$.
\item For any finite set $I$, one has $f(\sum_{i\in I}\alpha_iw_i)=\sum_{i\in I}\alpha_if(w_i)$ where $\sum_{i\in I}\alpha_iw_i \in \mathbb{P}_n$( finite non-trivial positive linear combination).
\end{enumerate}
Then $f$ can be uniquely extended as a character 
i.e. $S_f=\sum_{w\in Y^*_0}f(w) w$ is group-like for $\Delta_{\shuffle}$.
\end{lemma}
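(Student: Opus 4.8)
The plan is to build the extension $f:\sqcup_{n\ge 0}\mathbb{P}_n\to\mathcal{A}$ up to a genuine linear map $\widetilde f:\ncp{\Q}{Y_0}\to\mathcal{A}$ and then check it is a $\shuffle$-character, so that by the Friedrichs criterion \cite{reutenauer} the series $S_f$ is group-like for $\Delta_{\shuffle}$. First I would observe that every nonzero homogeneous component $P\in\ncp{\Q}{Y_0}$ of ``bidegree'' $n$ (that is, $P$ supported on $\{w\mid (w)+|w|=n\}$) can be written $P=P_+-P_-$ with $P_+,P_-$ finite positive combinations of words of bidegree $n$, hence $P_+,P_-\in\mathbb{P}_n\cup\{0\}$. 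Setting $\widetilde f(P):=f(P_+)-f(P_-)$ is forced, and well-definedness follows from hypothesis~(2): if $P_+-P_-=Q_+-Q_-$ then $P_++Q_-=Q_++P_-$ are two equal elements of $\mathbb{P}_n$ (or both zero), so $f(P_+)+f(Q_-)=f(Q_+)+f(P_-)$ by additivity on the cone, giving $f(P_+)-f(P_-)=f(Q_+)-f(Q_-)$. Extending $\Q$-linearly over the (finite, since supports are finite) bidegree grading $\ncp{\Q}{Y_0}=\bigoplus_n \ncp{\Q}{Y_0}_n$ yields a well-defined linear $\widetilde f$ agreeing with $f$ on $\sqcup_n\mathbb{P}_n$ and with $\widetilde f(1_{Y_0^*})=1_{\mathcal A}$.

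Next I would verify multiplicativity $\widetilde f(u\shuffle v)=\widetilde f(u)\widetilde f(v)$ for all words $u,v\in Y_0^*$; this is exactly hypothesis~(1), but I must first note that $u\shuffle v$ is a positive combination of words all of bidegree $(u)+|u|+(v)+|v|$, so it lies in a single cone $\mathbb{P}_N$ and $\widetilde f$ restricted there is the given $f$. Then for arbitrary $P=\sum_u a_u u$, $Q=\sum_v b_v v$ I extend bilinearly: $\widetilde f(P\shuffle Q)=\sum_{u,v}a_ub_v\widetilde f(u\shuffle v)=\sum_{u,v}a_ub_v\widetilde f(u)\widetilde f(v)=\widetilde f(P)\widetilde f(Q)$, using linearity of $\widetilde f$ (here I may need to split each $u\shuffle v$ into its bidegree-homogeneous — actually single-bidegree — piece and invoke additivity, which is already packaged in the linearity of $\widetilde f$). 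Together with $\widetilde f(1_{Y_0^*})=1_{\mathcal A}$ this says $\widetilde f$ is an algebra morphism $(\ncp{\Q}{Y_0},\shuffle,1_{Y_0^*})\to\mathcal A$, i.e.\ a character. Uniqueness is clear since any character restricting to $f$ on words is determined on all of $\ncp{\Q}{Y_0}$ by linearity, and on $\sqcup_n\mathbb{P}_n$ it must equal $f$ by the additivity condition~(2).

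Finally, to conclude that $S_f=\sum_{w\in Y_0^*}f(w)\,w$ is group-like for $\Delta_{\shuffle}$, I would invoke the standard duality: for a commutative bialgebra, a series $S$ with $\scal{S}{1_{Y_0^*}}=1$ is group-like for $\Delta_{\shuffle}$ (the coproduct dual to $\shuffle$) if and only if the linear form $w\mapsto\scal{S}{w}$ is a character of the $\shuffle$-algebra; equivalently $\scal{S}{u\shuffle v}=\scal{S}{u}\scal{S}{v}$, which is precisely what we just proved with $\scal{S_f}{w}=f(w)=\widetilde f(w)$. This is the Friedrichs criterion as stated in \cite{reutenauer}, and it immediately gives that $\log S_f$ is primitive as well.

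I expect the only real subtlety — rather than a deep obstacle — to be the bookkeeping that makes the passage from ``additive and multiplicative on the blunt cones $\mathbb{P}_n$'' to ``$\Q$-linear and $\shuffle$-multiplicative on all of $\ncp{\Q}{Y_0}$'' rigorous: one must use that each cone $\mathbb{P}_n$ sits inside the finite-dimensional bidegree-$n$ component, that $u\shuffle v$ stays within a single bidegree, and that differences of cone elements in the same bidegree exhaust that component. Once the grading structure is made explicit this is routine, and the character/group-like equivalence is a black-box citation.
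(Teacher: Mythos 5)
Your proposal is correct and follows essentially the same route as the paper: extend $f$ linearly from each blunt cone $\mathbb{P}_n$ to its linear span $\mathbb{P}_n-\mathbb{P}_n=\ncp{\R_n}{Y_0}$ (the paper delegates the $P_+-P_-$ well-definedness argument you spell out to its Appendix A), observe that $u\shuffle v$ is a positive combination of words of the single bidegree $(u)+|u|+(v)+|v|$ so multiplicativity on words is inherited from hypothesis (1), and conclude group-likeness of $S_f$ by the character/group-like duality (Friedrichs criterion). The only difference is one of exposition, not of substance.
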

\begin{proof}
The linear span of $\mathbb{P}_n$ is the space of homogeneous polynomials of degree $n$, ({\it i.e.,} $\mathbb{P}_n-\mathbb{P}_n=\ncp{\R_n}{Y_0})$, $\mathbb{P}_n$  being convex (and non-void), $f$ extends uniquely,    as a linear map, to $\ncp{\R_n}{Y_0}$ and then, as a linear map, on $\oplus_{n\geq 0} \ncp{\R_n}{Y_0}=\ncp{\R}{Y_0}$. This linear extension is a morphism for the shuffle product as it is so on the (linear) generators $Y_0^*$. 

By definition of $f$ and $S_f$, it is immediate $\scal{S_f}{1_{Y^*_0}}=1_{\mathcal{A}}$. One can check easily that $\Delta_{\shuffle}(S_f)=S_f\otimes S_f$. Hence, $S_f$ is group-like, for $\Delta_{\shuffle}$.
\end{proof}

\begin{corollary}
The noncommutative generating series $C^-$ is group-like, for $\Delta_{\shuffle}$.
\end{corollary}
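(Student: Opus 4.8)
The plan is to obtain the Corollary as an essentially immediate consequence of Lemma \ref{HGM}, once we verify that the map $C^-_{\bullet}$, restricted to $\sqcup_{n\geq 0}\mathbb{P}_n$, satisfies the two hypotheses of that lemma. So the first step is to identify the target algebra: here $\mathcal{A}=\R$ (or $\Q$), viewed as an associative algebra with unit, and the map is $f=C^-_{\bullet}$ sending a word $w\in Y_0^*$ to the constant $C^-_w\in\Q$ of Proposition \ref{P3}, with $f(1_{Y_0^*})=C^-_{1_{Y_0^*}}=1$.

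Next I would check hypothesis (2) of Lemma \ref{HGM}, linearity on each $\mathbb{P}_n$: this is exactly the remark recorded just after Definition \ref{blunt cones}, namely that $C^-_{\bullet}$ is linear on $\mathbb{P}_n$. Indeed, if $\sum_{i\in I}\alpha_i w_i\in\mathbb{P}_n$ then all the $w_i$ share the common value $(w_i)+|w_i|=n$, and since $\H^-_{w_i}(N)\sim C^-_{w_i}N^n$ for each $i$, the leading coefficient of $\H^-_{\sum_i\alpha_i w_i}=\sum_i\alpha_i\H^-_{w_i}$ in degree $n$ is $\sum_i\alpha_i C^-_{w_i}$, which is by definition $C^-_{\sum_i\alpha_i w_i}$ (this is precisely Step 1 of the extension algorithm, and it terminates here because the sum lies in a single $\mathbb{P}_n$). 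Then I would check hypothesis (1), the multiplicativity $C^-_{u\shuffle v}=C^-_u C^-_v$ for $u,v\in Y_0^*$: this is Corollary \ref{tac}(1), $C^-_w C^-_v=C^-_{w\shuffle v}=C^-_{w\stuffle v}$.

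With both hypotheses verified, Lemma \ref{HGM} applies verbatim and yields that $f=C^-_{\bullet}$ extends uniquely to a character of $(\ncp{\R}{Y_0},\shuffle)$, i.e. that $S_f=\sum_{w\in Y_0^*}C^-_w\,w=C^-$ (the series of Definition \ref{C}) is group-like for $\Delta_{\shuffle}$. That is the assertion of the Corollary, so nothing further is needed; one may optionally add the remark, already flagged in the text preceding Lemma \ref{HGM}, that by the Friedrichs criterion \cite{reutenauer} group-likeness of $C^-$ for $\Delta_{\shuffle}$ is equivalent to $\log C^-$ being primitive.

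I do not expect a serious obstacle: the whole content has been front-loaded into Corollary \ref{tac} and Lemma \ref{HGM}. The only point that deserves a sentence of care is that the extension of $C^-_{\bullet}$ from words to all of $\ncp{\R}{Y_0}$ is well defined, i.e. that the value assigned to a polynomial does not depend on how it is written as a linear combination of words; but this is automatic once one notes, as in the proof of Lemma \ref{HGM}, that $\mathbb{P}_n-\mathbb{P}_n=\ncp{\R_n}{Y_0}$ is the full space of homogeneous degree-$n$ polynomials and that a positive-cone-linear map on $\mathbb{P}_n$ extends uniquely to a linear map there, with the graded pieces then assembled over $n$. Hence the proof is a two-line citation of Lemma \ref{HGM} applied to $f=C^-_{\bullet}$, with hypotheses supplied by Corollary \ref{tac} and the linearity remark after Definition \ref{blunt cones}.
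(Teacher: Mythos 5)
Your proposal is correct and follows exactly the paper's route: the paper's own proof is the one-line statement that the corollary is a consequence of Lemma \ref{HGM} and Corollary \ref{tac}, and you have simply spelled out the verification of the two hypotheses (multiplicativity from Corollary \ref{tac} and cone-linearity from the remark after Definition \ref{blunt cones}). Nothing further is needed.
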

\begin{proof}
It is a consequence of Lemma \ref{HGM} and Corollary \ref{tac}.
\end{proof}
\begin{example}[of $C^-_{u\shuffle v}$ and $C^-_{u\stuffle v}$]~\newline
{
\begin{tabular}{|c|c|c|c|c|c|}
\hline
$u$ &$C^-_u$&$v$ &$C^-_v$ &$u\shuffle v$& $C^-_{u\shuffle v}$ \\ 
\hline
$y_0$&$1$ & $y_0$&$1$ &$2y^2_0$& $1$ \\ 
 \hline
 $y_0^2$&$\frac{1}{2}$ & & &&  \\ 
 \hline
$y_1$&$\frac{1}{2}$ & $y_2$&$\frac{1}{3}$ &$y_1y_2 +y_2y_1$& $\frac{1}{6}$ \\ 
 \hline
$y_1y_2$&$\frac{1}{15}$ & $y_2y_1$&$\frac{1}{10}$ && \\ 
 \hline
$y_m$&$\frac{1}{m+1}$ & $y_n$&$\frac{1}{n+1}$ &$y_my_n +y_ny_m$& $\frac{1}{(m+1)(n+1)}$ \\ 
 \hline
$y_my_n$&$\frac{(n+1)^{-1}}{(n+m+2)}$ & $y_ny_m$&$\frac{(m+1)^{-1}}{(m+n+2)}$ && \\ 
 \hline
$y_1$&$\frac{1}{2}$& $y_2y_5$ &$\frac{1}{54}$& $y_1y_2y_5 + y_2y_1y_5 + y_2y_5y_1$& $\frac{1}{108}$\\
\hline 
$y_1y_2y_5$&$\frac{1}{594}$ & $y_2y_1y_5$&$\frac{1}{528}$ && \\ 
 \hline
$y_2y_5y_1$&$\frac{1}{176}$ & & && \\ 
 \hline
$y_0y_1$&$\frac{1}{6}$ & $y_2y_3$&$\frac{1}{28}$ &$y_0y_1y_2y_3 + y_0y_2y_1y_3$ & $\frac{1}{168}$ \\ 
& & & &$+ y_0y_2y_3y_1 + y_2y_3y_0y_1$&  \\ 
& & & &$+ y_2y_0y_1y_3 + y_2y_0y_3y_1$&  \\ 
 \hline
$y_0y_1y_2y_3$&$\frac{1}{2520}$ & $y_0y_2y_1y_3$&$\frac{1}{2160}$ && \\ 
 \hline
$y_0y_2y_3y_1$&$\frac{1}{1080}$ & $y_2y_3y_0y_1$&$\frac{1}{420}$ && \\ 
 \hline
$y_2y_0y_1y_3$&$\frac{1}{1680}$ & $y_2y_0y_3y_1$&$\frac{1}{840}$ && \\ 
 \hline
$y_ay_b$&$\frac{(b+1)^{-1}}{(a+b+2)}$ & $y_cy_d$&$\frac{(d+1)^{-1}}{(c+d+2)}$ &$y_ay_by_cy_d + y_ay_cy_by_d$ & $\frac{(b+1)^{-1}(d+1)^{-1}}{(a+b+2)(c+d+2)}$ \\ 
& & & &$+ y_ay_cy_dy_b + y_cy_dy_ay_b$&  \\ 
& & & &$+ y_cy_ay_by_d + y_cy_ay_dy_b$&  \\ 
 \hline
\end{tabular}\\[3pt]
\noindent
\begin{tabular}{|c|c|c|c|c|c|}
\hline
$u$ &$C^-_u$&$v$ &$C^-_v$ &$u\stuffle v$& $C^-_{u\stuffle v}$ \\ 
\hline
$y_0$&$1$ & $y_0$&$1$ &$2y^2_0 + y_0$& $1$ \\ 
 \hline
$y_1$&$\frac{1}{2}$ & $y_2$&$\frac{1}{3}$ &$y_1y_2 +y_2y_1 + y_3$& $\frac{1}{6}$ \\ 
 \hline
$y_m$&$\frac{1}{m+1}$ & $y_n$&$\frac{1}{n+1}$ &$y_my_n +y_ny_m + y_{n+m}$& $\frac{1}{(m+1)(n+1)}$ \\ 
 \hline
$y_1$&$\frac{1}{2}$& $y_2y_5$ &$\frac{1}{54}$& $y_1y_2y_5 + y_2y_1y_5 + y_2y_5y_1$& $\frac{1}{108}$\\ 
& & & &$+y_3y_5 + y_2y_6$& \\ 
 \hline
$y_0y_1$&$\frac{1}{6}$ & $y_2y_3$&$\frac{1}{28}$ &$y_0y_1y_2y_3 + y_0y_2y_1y_3$ & $\frac{1}{168}$ \\ 
& & & &$+ y_0y_2y_3y_1 + y_2y_3y_0y_1$&  \\ 
& & & &$+ y_2y_0y_1y_3 + y_2y_0y_3y_1 + y_0y_2y_4$&  \\ 
& & & & $+ y_0y_3^2+ y_2y_3y_1 + y_2y_1y_3$&  \\ 
& & & & $+ y_2y_0y_4 + y_2y_3y_1 + y_2y_4$&  \\
 \hline
$y_ay_b$&$\frac{(b+1)^{-1}}{(a+b+2)}$ & $y_cy_d$&$\frac{(d+1)^{-1}}{(c+d+2)}$ &$y_ay_by_cy_d + y_ay_cy_by_d$ & $\frac{(b+1)^{-1}(d+1)^{-1}}{(a+b+2)(c+d+2)}$ \\ 
& & & &$+ y_ay_cy_dy_b + y_cy_dy_ay_b$&  \\ 
& & & &$+ y_cy_ay_by_d + y_cy_ay_dy_b$&  \\ 
& & & &$+ y_ay_cy_{b+d} + y_ay_{b+c}y_d$&  \\ 
& & & &$+ y_cy_ay_{b+d} + y_cy_{a+d}y_b$&  \\ 
& & & &$+ y_{a+c}y_by_d + y_{a+c}y_dy_b + y_{a+c}y_{b+d}$&  \\
\hline
\end{tabular}
}

In the above tables, it can be clearly seen that 
$C^-_{\bullet}$ is linear on $\mathbb{P}_n$.
For example, let $u = y_1$ and $v = y_2y_5$. Then $u \shuffle v=y_1y_2y_5+y_2y_1y_5+y_2y_5y_1$.
Hence, we get 
\begin{eqnarray*}
C^-_{y_1y_2y_5}+C^-_{ y_2y_1y_5}+C^-_{ y_2y_5y_1}=\frac{1}{594}+\frac{1}{528}+\frac{1}{176}=\frac{1}{108}
=C^-_{y_1}C^-_{ y_2y_5}=C^-_{y_1 \shuffle y_2y_5}.
\end{eqnarray*}
Note that $y_1y_2y_5,y_2y_1y_5,y_2y_5y_1\in\mathbb{P}_{11}$.
But we have also $u \stuffle v = y_1y_2y_5 + y_2y_1y_5 + y_2y_5y_1 +y_3y_5 + y_2y_6$. It is clearly seen that
\begin{eqnarray*}
C^-_{y_1y_2y_5}+ C^-_{ y_2y_1y_5}+ C^-_{ y_2y_5y_1}+C^-_{y_3y_5}+C^-_{y_2y_6}=\frac{1}{108}+\frac{13}{420}\neq \frac{1}{108}=C^-_{y_1} C^-_{y_2y_5}.
\end{eqnarray*}
However, as $y_3y_5,y_2y_6 \in\mathbb{P}_{10}$, we can conclude that
\begin{eqnarray*}
C^-_{y_1 \stuffle y_2y_5} = C^-_{y_1y_2y_5 + y_2y_1y_5 + y_2y_5y_1 +y_3y_5 + y_2y_6} = C^-_{y_1y_2y_5 + y_2y_1y_5 + y_2y_5y_1} = {1}/{108} = C^-_{y_1}C^-_{y_2y_5}.
\end{eqnarray*}
\end{example}

\subsection{Structure of polylogarithms at non-positive multi-indices}\label{sectiontop}
We are constructing a new product on $\Q\left\langle{Y_0}\right\rangle$ which is associated with polylogarithms at non-positive multi-indices as an algebra. This construction will rest on the following very general 
lemma which gives a way to find a preimage of the given law.
\begin{lemma}\label{law}
Let $k$ be a field, $V$ a $k$-vector space, $\A$ a $k$-AAU and  $\phi:V\twoheadrightarrow\A$
be an onto (linear) mapping. We consider the $k$-AA\footnote{$k$- associative algebra (not necessarily with unit); $k$- AAU being for the category of $k$- associative algebra with unit.} laws on $V$ such that, for all $x,y\in V$ 
\begin{eqnarray}
&\phi(x\top y)=\phi(x)\phi(y).\label{eq105}\\
&\mbox{If}\quad x\top y\in\ker(\phi)\quad\mbox{then}\quad x\top y=0.\label{eq205}
\end{eqnarray}
Then the following clauses hold
\begin{enumerate}
\item There is at least a solution $\top_0$ of (\ref{eq105}) and (\ref{eq205}).
\item Let $G_\phi$ be the group of (linear) automorphisms (subgroup of $GL(V)$) stabilizing $\phi$ on the right, {\it i.e.} $G_\phi:=\{\alpha\in GL(V)|\phi\circ\alpha=\phi\}$.
Then any other law $\top$ satisfying (\ref{eq105}) and (\ref{eq205}) is of the form $\top=\alpha\circ\top_0$ (one orbit under $G_\phi$).
\item This statement holds with  $G^{(1)}_\phi:=\{\alpha\in GL(V)|\phi\circ\alpha=\phi;\alpha_{|_{\ker(\phi)}}=\mathrm{Id}_{\ker(\phi)}\}$.
\end{enumerate}
\end{lemma}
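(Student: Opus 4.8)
The plan is to construct $\top_0$ explicitly by transporting the algebra structure of $\A$ back through a section of $\phi$. First I would choose a linear section $s\colon\A\to V$ of $\phi$ (so $\phi\circ s=\mathrm{Id}_\A$), which exists since $k$ is a field and $\phi$ is onto; concretely, pick a complement $W$ of $\ker(\phi)$ in $V$, so that $\phi|_W\colon W\to\A$ is an isomorphism, and let $s=(\phi|_W)^{-1}$. Then define, for $x,y\in V$,
\begin{eqnarray*}
x\top_0 y:=s\bigl(\phi(x)\phi(y)\bigr).
\end{eqnarray*}
One checks directly that $\phi(x\top_0 y)=\phi(s(\phi(x)\phi(y)))=\phi(x)\phi(y)$, which is \eqref{eq105}; associativity of $\top_0$ follows from associativity in $\A$ because $\phi\circ s=\mathrm{Id}$; and if $x\top_0 y\in\ker(\phi)$ then $\phi(x)\phi(y)=\phi(x\top_0 y)=0$, hence $x\top_0 y=s(0)=0$, which is \eqref{eq205}. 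This proves clause 1.

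For clause 2, suppose $\top$ is any law satisfying \eqref{eq105} and \eqref{eq205}. I would define $\alpha\colon V\to V$ as follows: every $v\in V$ can be written (using the section $s$ of $\top_0$, i.e. $s=(\phi|_W)^{-1}$ followed by the inclusion $W\hookrightarrow V$) and in fact it is cleaner to argue that the map sending a generator-pair product $s(a)\top_0 s(b)$ to $s(a)\top s(b)$ extends linearly. More precisely, both $\top$ and $\top_0$ make $V$ into an associative algebra for which $\phi$ is a morphism onto $\A$; since $\phi|_W$ is an isomorphism, transporting $\top$ and $\top_0$ to $\A$ via $\phi|_W$ need not agree, but the composite $\alpha:=(\top_0\text{-transport})^{-1}\circ(\top\text{-transport})$, suitably interpreted, gives an element of $GL(V)$. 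The honest way: define $\alpha$ on $V$ by $\alpha(v)=s(\phi(v))+(v-s(\phi(v)))$ corrected so that $\phi\circ\alpha=\phi$ and $x\top y=\alpha(x\top_0 y)$ for all $x,y$. Because $\phi(x\top y)=\phi(x)\phi(y)=\phi(x\top_0 y)$, the two elements $x\top y$ and $x\top_0 y$ have the same image under $\phi$, so they differ by an element of $\ker(\phi)$; one then checks that the assignment $x\top_0 y\mapsto x\top y$ is well-defined and linear on the span of all such products — this is where \eqref{eq205} is used, to control the kernel of the product map — and extends to a linear automorphism $\alpha\in GL(V)$ with $\phi\circ\alpha=\phi$, i.e.\ $\alpha\in G_\phi$. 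Conversely, for any $\alpha\in G_\phi$, the law $\alpha\circ\top_0$ again satisfies \eqref{eq105} (since $\phi\circ\alpha=\phi$) and \eqref{eq205} (since $\alpha$ is injective), and associativity is inherited; so the solution set is exactly the orbit $G_\phi\cdot\top_0$.

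For clause 3, I would simply refine the choice of $\alpha$ in clause 2 so that it restricts to the identity on $\ker(\phi)$: this is possible because the ambiguity $x\top y-x\top_0 y$ lives in $\ker(\phi)$ but the \emph{values} $x\top_0 y$ that we must move range over a subspace that can be taken inside $W$ (using the section $s$ with image $W$), so $\alpha$ can be chosen to be the identity on $\ker(\phi)$ and to implement the correction only on $W$. Symmetrically, $G^{(1)}_\phi\cdot\top_0$ is closed under the operations above, giving the refined orbit statement.

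The main obstacle I anticipate is making the definition of $\alpha$ genuinely well-defined and linear: a priori $\alpha$ is only specified on the \emph{products} $x\top_0 y$, and one must check that if a linear combination $\sum_i x_i\top_0 y_i$ vanishes then $\sum_i x_i\top y_i$ vanishes too, so that $\alpha$ descends to a well-defined linear map on the span of products — and then that this span, together with a complement on which we set $\alpha=\mathrm{Id}$, yields an element of $GL(V)$. Condition \eqref{eq205} (the ``no zero divisors into $\ker\phi$'' clause) is exactly the hypothesis that should make this bookkeeping go through; verifying it carefully, rather than the existence of $\top_0$, is the delicate point.
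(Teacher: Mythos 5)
Your clause 1 is correct and is exactly the paper's construction: take a linear section $s$ of $\phi$, set $x\top_0 y=s(\phi(x)\phi(y))$, and use $\mathrm{Im}(s)\cap\ker(\phi)=(0)$ to get (\ref{eq205}). The genuine gap is in clause 2, at the point you yourself flag as ``the delicate point'' and then leave open. You want to define $\alpha$ on the span of the $\top_0$-products by $x\top_0 y\mapsto x\top y$, and you assert that (\ref{eq205}) is ``exactly the hypothesis'' that makes this well defined. It is not, at least not directly: well-definedness requires that $\sum_i\lambda_i\,(x_i\top y_i)=0$ whenever $\sum_i\lambda_i\,(x_i\top_0 y_i)=0$, i.e.\ whenever $\sum_i\lambda_i\,\phi(x_i)\phi(y_i)=0$. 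All that (\ref{eq105}) gives you is that the element $\sum_i\lambda_i\,(x_i\top y_i)$ lies in $\ker(\phi)$; but (\ref{eq205}) only kills a \emph{single} product $x\top y$ lying in $\ker(\phi)$, and a linear combination of products is not a priori a product. So the central step of your clause 2 is missing, and clause 3 inherits the problem.

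The paper closes exactly this gap by a different device: it proves that any law $\top$ satisfying (\ref{eq105}) and (\ref{eq205}) is itself of section form. Pick $e_0$ with $\phi(e_0)=1_{\A}$ and set $e=e_0\top e_0$; bilinearity and associativity let one rewrite the difference $e_0\top e_0-e_0\top e_0\top e_0$ as the \emph{single} product $e_0\top(e_0-e_0\top e_0)$, which lies in $\ker(\phi)$, so (\ref{eq205}) applies and $e$ is a $\top$-idempotent with $\phi(e)=1_{\A}$. Then $s(\phi(x)):=x\top e$ is a well-defined linear section of $\phi$ (independence of the choice of preimage $x$ again reduces to a single product lying in $\ker(\phi)$), and the identity $x\top y\top e-x\top y=x\top(y\top e-y)$ gives $x\top y=s(\phi(x)\phi(y))$. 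Since two linear sections of $\phi$ differ by an element of $G^{(1)}_\phi$, clauses 2 and 3 follow. If you want to rescue your argument you must find a similar trick that converts the linear combinations appearing in your well-definedness check into single products; the unit of $\A$ together with associativity and bilinearity of $\top$ is what makes this possible, and without some such reduction condition (\ref{eq205}) is too weak to do the job you assign to it.
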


\begin{proof}
\begin{enumerate}
\item Let $s$ be a linear section of $\phi$, {\it i.e.} $\phi\circ s=\mathrm{Id}_\A$. Then let  $x\top y=s(\phi(x)\phi(y))$.
It is straightforward $\top$ satisfies (\ref{eq105}). Now, $x\top y=s(\phi(x)\phi(y))\in\mathrm{Im}(s)$ if, moreover $x\top y\in\ker(\phi)$,
as $(0)=\mathrm{Im}(s)\cap\ker(\phi)$,  
we must have $x\top y=0$ and hence (\ref{eq205}).
\item We have to prove ``Every product $\top$ of the form $\top=\alpha\circ\top_0$ satisfies (\ref{eq105})-(\ref{eq205})''
and conversely ``every product $\top$ which satisfies (\ref{eq105})-(\ref{eq205}) is of the form $\top=\alpha\circ\top_0$''.

Firstly, let us compute 
$\phi(x\top y)=\phi(\alpha(x\top_0 y))=\phi(x\top_0 y)=\phi(x)\phi(y)$.
This proves (\ref{eq105}). On the other hand, $x\top y\in \ker(\phi)$ is equivalent to $\phi(x\top y)=0$.
Then $0=\phi(x\top y)=\phi(\alpha(x\top_0 y))=\phi(x\top_0 y)$.
Hence, $x\top_0 y=0$ and, because $\top_0$ satisfies (\ref{eq205}), and then, $x\top y=\alpha(x\top_0 y)=0$ which proves (\ref{eq205}) for $\top$.

Now, let us suppose that $\top$ satisfies (\ref{eq105}) and (\ref{eq205}) and first compute an idempotent (for $\top$) $e$ such that $\phi(e)=1_\A$.
We start with any preimage $e_0$ of $1_\A$ and form $e=e_0\top e_0$. Then 
$\phi(e_0\top e_0-e_0\top e_0\top e_0)=\phi(e_0)\phi(e_0)-\phi(e_0)\phi(e_0)\phi(e_0)=1-1=0.$
Hence, $e_0\top e_0-e_0\top e_0\top e_0=e_0\top(e_0-e_0\top e_0)\in \ker(\phi)$ and then, by (\ref{eq205}), we have
$0=e_0\top(e_0-e_0\top e_0)=e_0\top e_0-e_0\top e_0\top e_0$.

Now $e\top e=(e_0\top e_0)\top (e_0\top e_0)=e_0\top (e_0\top e_0\top e_0)=e_0\top (e_0\top e_0)=e_0\top e_0=e$  
and $\phi(e)=\phi(e_0\top e_0)=1_\A1_\A=1_\A$.

Now, let $y\in\A$. It is easy to check that the values $x\top e$ are independant from the choice of $x$, preimage of $y$.
Set $s(y)=x\top e$.  For $y=\phi(x)$, we have
\begin{eqnarray}\label{s_def}
s(y)=s(\phi(x))=x\top e.
\end{eqnarray} 
The map $s$ is a section of $\phi$ as $\phi(s(y))=\phi(x\top e)=\phi(x)\phi(e)=\phi(x)$.

To end, it remains to prove that $x\top y=s(\phi(x)\phi(y))$ but
\begin{eqnarray}\label{tbf1}
s(\phi(x)\phi(y))=s(\phi(x\top y))=x\top y\top e.
\end{eqnarray}
Now $x\top y\top e-x\top y$ has image, by $\phi$, zero and then $x\top(y\top e-y)=0$ which, in virtue of (\ref{tbf1}),
ends the proof that $s$ exists and is such that $x\top y=s(\phi(x)\phi(y))$.

Now, if $s_1,s_2$ are two sections of $\phi$, there is $\alpha\in G^{(1)}_\phi$ such that
$s_2=\alpha s_1$. We can reprove it easily in our context. One has just to consider a basis of $\mathrm{Im}(s_1)$, take into account that 
$\ker(\phi)\oplus\mathrm{Im}(s_1)=\ker(\phi)\oplus\mathrm{Im}(s_2)=V$
and construct $\alpha$ by $\alpha_{|_{\mathrm{Im}(s_1})}=s_2,\phi_{|_{\mathrm{Im}(s_1)}}$ and $\alpha_{|_{\ker(\phi})}=\mathrm{Id}_{\ker(\phi)}$.
Let $\top_i$ be constructed from $s_i,i=1,2$. Then $x\top_2 y=s_2(\phi(x)\phi(y))=\alpha s_1(\phi(x)\phi(y))=\alpha(x\top_1 y)$.

\item Straightforward following the proof of the previous point.
\end{enumerate}
\end{proof}

\begin{theorem}\label{onto}
\begin{enumerate}
\item There is at least a law $\top$ such that the morphism defined in (\ref{maps2}) is onto and, such that, for any $P,Q\in\ncp{\Q}{Y_0}$,
\begin{eqnarray}
&\Li^-_{P\top Q}=\Li^-_P\Li^-_Q.\label{eq01}\\
&\mbox{If}\quad P\top Q\in \ker(\Li^-_{\bullet})\quad\mbox{then}\quad P\top Q=0.\label{eq02}
\end{eqnarray}
Then, let $G = \{ \alpha\in GL(\ncp{\Q}{Y_0})|\Li^-_{\bullet}\circ\alpha= \Li^-_{\bullet}\}\subset GL(\ncp{\Q}{Y_0}).$
\item Any other law satisfying \eqref{eq01}, \eqref{eq02} is of the form $\alpha\circ\top$( one orbit under $G$).
\item The associative commutative law $\top$ in \eqref{definitiontop} is non dualizable.
\end{enumerate}
\end{theorem}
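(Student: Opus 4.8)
The plan is to obtain parts (1) and (2) as a direct instance of Lemma~\ref{law}, and to establish (3) by exhibiting that the law $\top$ fails to be locally finite.

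For parts (1)--(2): the only hypothesis of Lemma~\ref{law} that is not purely formal is the surjectivity of $\Li^-_\bullet$ onto the algebra $\Q\{\Li^-_w\}_{w\in Y_0^*}$. By Theorem~\ref{pr2}(2) every $\Li^-_w$ lies in $\Z[(1-z)^{-1}]$; conversely, the same statement expresses each $(1-z)^{-k}$ as a $\Q$-linear combination of $(1-z)^{-1}=\Li^-_{y_0}+\Li^-_{1_{Y_0^*}}$ and of $\Li^-_{y_1},\dots,\Li^-_{y_{k-1}}$. Hence $\mathrm{span}_\Q\{\Li^-_w\}_{w\in Y_0^*}=\Q[(1-z)^{-1}]$, so it coincides with the subalgebra it generates, and $\Li^-_\bullet\colon\ncp{\Q}{Y_0}\to\Q\{\Li^-_w\}_{w\in Y_0^*}$ is onto a commutative $\Q$-AAU. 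Applying Lemma~\ref{law} with $V=\ncp{\Q}{Y_0}$, $\A=\Q\{\Li^-_w\}_{w\in Y_0^*}$ and $\phi=\Li^-_\bullet$: clause~(1) produces a law $\top$ with $\Li^-_{P\top Q}=\Li^-_P\Li^-_Q$ (so that \eqref{maps2} is an onto morphism of algebras) and with \eqref{eq02} holding, and clause~(2) gives that any law satisfying \eqref{eq01}--\eqref{eq02} equals $\alpha\circ\top$ for some $\alpha\in G$. This settles parts (1) and (2).

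For (3): a product on $\ncp{\Q}{Y_0}$ is dualizable exactly when it is locally finite, i.e.\ when for each $w\in Y_0^*$ only finitely many pairs $(u,v)\in Y_0^*\times Y_0^*$ satisfy $\scal{u\top v}{w}\neq0$ (this is precisely the condition for the transpose of $\top$ to corestrict from $(\ncp{\Q}{Y_0}\otimes\ncp{\Q}{Y_0})^*$ to $\ncp{\Q}{Y_0}^*\otimes\ncp{\Q}{Y_0}^*$). I claim this fails already for $w=y_0$. Since $\Li^-_{y_0^k}=(\Li^-_{y_0})^k$ (immediate from Theorem~\ref{pr2}(1)) and $\Li^-_{y_0}(z)=(1-z)^{-1}-1$, for $n=a+b$ one has $\Li^-_{y_0^a}\Li^-_{y_0^b}=\sum_{k=0}^{n}\binom{n}{k}(-1)^{n-k}(1-z)^{-k}$; by the construction \eqref{definitiontop}, $y_0^a\top y_0^b$ is the re-expansion of this in the triangular basis $(1_{Y_0^*},y_0,y_1,y_2,\dots)$ of $\ncp{\Q}{Y_0}$ attached to $(1,\Li^-_{y_0},\Li^-_{y_1},\dots)$ via the matrix $T$ of \eqref{T}. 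As every $(1-z)^{-k}$ with $k\ge1$ has $\Li^-_{y_0}$-coefficient $1$ in that basis (Theorem~\ref{pr2}(2)), the $y_0$-coefficient of $y_0^a\top y_0^b$ equals $\sum_{k=1}^{n}\binom{n}{k}(-1)^{n-k}=-(-1)^n=(-1)^{n+1}$, which is nonzero for all $n\ge2$. Hence $\scal{y_0^a\top y_0^b}{y_0}\neq0$ for all $a,b\ge1$ — infinitely many pairs — so $\top$ is not locally finite and therefore non dualizable.

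The parts that are routine are the surjectivity remark and the bookkeeping inside Lemma~\ref{law}. The main obstacle is (3): one must first fix "non dualizable" as meaning failure of local finiteness of the transpose, and then be sure the chosen witness really works for the specific representative \eqref{definitiontop} rather than for some other member of the $G$-orbit — which is why it is convenient to pick a witness ($w=y_0$, pairs $(y_0^a,y_0^b)$) for which the relevant coordinate is pinned down by Theorem~\ref{pr2}(2) and evaluates to the clean value $(-1)^{n+1}$.
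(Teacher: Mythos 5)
Your proposal is correct. For parts (1) and (2) you take exactly the paper's route: its entire proof of these items is the remark that, with $V=\ncp{\Q}{Y_0}$ and $\phi=\Li^-_{\bullet}$, everything follows from Lemma \ref{law}; your only addition is the explicit check that $\Li^-_{\bullet}$ is onto, via Theorem \ref{pr2}(2) and the identification $\mathrm{span}_{\Q}\{\Li^-_w\}_{w\in Y_0^*}=\Q[(1-z)^{-1}]$, which is a worthwhile detail the paper leaves implicit. For part (3) you genuinely diverge: the paper disposes of it in one line by citing the characterization of dualizability in \cite{BDHHT}, whereas you unfold that characterization (dualizability of $\top$ amounts to finite support of $\{\scal{u\top v}{w}\}_{(u,v)\in Y_0^*\times Y_0^*}$ for each fixed $w$, so that the transpose corestricts to $\ncp{\Q}{Y_0}^*\otimes\ncp{\Q}{Y_0}^*$) and then exhibit a concrete witness. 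Your computation checks out: $\Li^-_{y_0^a}\Li^-_{y_0^b}=((1-z)^{-1}-1)^{a+b}$, each $(1-z)^{-k}$ with $k\ge1$ contributes coefficient $1$ on $\Li^-_{y_0}$ by Theorem \ref{pr2}(2) together with $(1-z)^{-1}=1_{\calC}+\Li^-_{y_0}$, whence $\scal{y_0^a\top y_0^b}{y_0}=(-1)^{a+b+1}\neq 0$ for infinitely many pairs; the coefficients $a_s(u,v)$ in \eqref{definitiontop} are indeed unambiguous because $\{1_{\calC}\}\cup\{\Li^-_{y_s}\}_{s\ge0}$ is triangular, hence free, over $\{(1-z)^{-k}\}_{k\ge0}$ (cf.\ Theorem \ref{Tonghop}(2)). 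What your approach buys is a self-contained proof of (3) that does not depend on importing the statement from \cite{BDHHT}; what it costs is the need to fix the meaning of ``non dualizable'', but the local-finiteness reading you adopt is precisely the one used there, so the two arguments are compatible.
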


\begin{proof}
For $V=\ncp{\Q}{Y_0}$ and $\phi=\Li^-_{\bullet}$,  the first result is a consequence of Lemma \ref{law}.
The last is a consequence of the caracterization of dualizability \cite{BDHHT}.
\end{proof}

By Theorem \ref{pr2}, $\{\Li^-_w\}_{w\in Y^*_0}$ can be represented in $\{\Li^-_{y_s}\}_{s\ge0}\cup\{\Li^-_{1_{Y^*_0}}\}$.
Thus, since, for any $u,v \in Y^*_0$, $\Li^-_u\Li^-_v\in\mathrm{span}_{\Q}{\{\Li^-_{y_k}\}_{k\ge0}}$ then we get successively
\begin{eqnarray*}
\Li^-_u\Li_v^-&=&a_{1_{Y^*_0}}(u,v)+\Sum_{s = 0}^{|u|+(u)+|v|+(v)-1}a_s(u,v)\Li^-_{y_s},
\end{eqnarray*}
and then, one defines
\begin{eqnarray}\label{definitiontop}
u \top v&=&a_{1_{Y^*_0}}(u,v)1_{Y^*_0}+\Sum_{s=0}^{|u|+(u)+|v|+(v)-1}a_s(u, v) y_s.
\end{eqnarray}
This defines a law of $\top$ which is associative and commutative on $\Q\left\langle{Y_0}\right\rangle$.
\begin{example}\label{Top}
\begin{enumerate}
\item Since $\Li^-_{1_{Y^*_0}}=\1_{\calC}$ then, for  $y\in Y_0$, $y\top 1_{Y^*_0}=1_{Y^*_0}\top y=y$.
\item One has\footnote{For any $m,n,k \in \N$ such that $0\le k\le m+n-2$, we denote $A_{m,n,k}=\sum_{t=0}^{k}A_{n,t}A_{m, k-t}$.
Here, for any $m, n\in\N$, $A_{m,n,-1}=A_{m,n, -2}=0$.},
\begin{eqnarray*}
\Li^-_{y_m}(z) \Li^-_{y_n}(z)&=&\frac{z^2}{(1-z)^2}\frac{1}{(1-z)^{m+n}}\sum_{k=0}^{m+n-2}A^-_{m,n,k}z^k\cr
&=&\Sum_{k=0}^{m+n-2} A_{m,n,k} \Sum_{t=0}^{k+2} \binom{k+2}{t} \Frac{(-1)^t}{(1-z)^{m+n+2-t}} \cr
&=&\Sum_{k=2}^{m+n+2}\Sum_{j=m+n-k}^{m+n-2} A_{m,n,j} \binom{j+2}{m+n+2-t} (-1)^{m+n-k}\Frac{1}{(1-z)^k}.
\end{eqnarray*}
By Theorem \ref{pr2} and since $(1-z)^{-1}=\Li^-_{y_0}(z)-1_{\calC}$ then we obtain successively
\begin{eqnarray*}\label{multipleLi}
\Li^-_{y_m}(z) \Li^-_{y_n}(z)&=&\Sum_{k=2}^{m+n+2}\gamma_{m,n,k}\bigg(\Frac{1}{1-z}
+\Sum_{j=2}^{k}\Frac{S_1(k,j)}{k!}\Li^-_{y_{j-1}}(z)\biggr),\\
y_m \top y_n&=&\Sum_{k=2}^{m+n+2}\gamma_{m,n,k} \bigg(y_0-1_{Y^*_0}
+\Sum_{j=2}^{k}\Frac{S_1(k,j)}{k!}y_{j-1}\biggr),
\end{eqnarray*}
where $\gamma_{m,n,k}:=\sum_{j=m+n-k}^{m+n-2}A_{m,n,j} \binom{j+2}{m+n+2-t}(-1)^{m+n-k}$.
For example,
$$\begin{array}{rcl}
y_5 \top y_4 &= & -\frac{1}{60}y_2 + \frac{1}{63}y_4 + \frac{1}{1260}y_{10},\cr
y_5 \top y_5 &= & -\frac{5}{66}y_1 + \frac{1}{12}y_3 -\frac{1}{126}y_5 + \frac{1}{2772}y_{11},\cr
y_6 \top y_7 &=& - \frac{691}{5460} y_2 + \frac{5}{33}y_4 - \frac{1}{40}y_6 + \frac{1}{24024}y_{14},\cr
y_8 \top y_{10} &=& \frac{43867}{798}y_1 - \frac{39787}{510}y_3 + \frac{77}{3}y_5 -\frac{11056}{4095}y_7 + \frac{5}{66}y_9 + \frac{1}{831402}y_{19}. 
\end{array}$$
\end{enumerate}
\end{example}

\begin{corollary}
\begin{enumerate}
\item For any $P,Q\in\Q\left\langle{Y}\right\rangle\oplus\Q y_0 $, one has  $B^-_PB^-_Q=B^-_{P\top Q }$.
\item Let $u,v\in Y_0^*$. Then $\scal{u\top v}{1_{Y^*_0}}\ne0$ if and only if $u=v=1_{Y^*_0}$.
\item For any $u,v$ and $w=y_{s_1}\ldots y_{s_r}\in Y^*_0$, one has $y_0u\top v=u \top y_0v=y_0\top(u\top v)$
and $w\top1_{Y^*_0}=\sum_{i=0}^{s_1}\binom{s_1}{i}y_i\top y_{s_1+s_2-i}y_{s_3}\ldots y_{s_r}$.
\item The map of (\ref{maps}) is onto and
$\ker\H^-_{\bullet}=\ker\Li^-_{\bullet}=\Q\langle{\{w-w\top1_{Y_0^*}|w\in Y_0^*\}}\rangle$.
\end{enumerate}
\end{corollary}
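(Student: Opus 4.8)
The plan is to treat the four clauses in turn. The common workhorse for the first three is the defining property \eqref{eq01} of $\top$ together with one elementary fact: by Theorem~\ref{pr2}(2) every $\Li^-_w$ lies in $\mathrm{span}_{\Q}\bigl(\{\1_{\calC}\}\cup\{\Li^-_{y_s}\}_{s\ge0}\bigr)$, and since $\1_{\calC}$ has degree $0$ while $\Li^-_{y_s}$ is a polynomial in $(1-z)^{-1}$ of degree $s+1$, this family is $\Q$-linearly independent; hence $\Li^-_{\bullet}$ is injective on $V_0:=\mathrm{span}_{\Q}\bigl(\{1_{Y_0^*}\}\cup\{y_s\}_{s\ge0}\bigr)$, which by \eqref{definitiontop} (extended bilinearly) is precisely where every $\top$-product of elements of $\ncp{\Q}{Y_0}$ lands. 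For clause (2) I would expand $\Li^-_u\Li^-_v=a_{1_{Y_0^*}}(u,v)\,\1_{\calC}+\sum_s a_s(u,v)\Li^-_{y_s}$ as in \eqref{definitiontop} and set $z=0$: since $\Li^-_{y_s}(0)=0$ for all $s$ and the Taylor expansion of $\Li^-_w$ starts in degree $|w|$, one gets $\scal{u\top v}{1_{Y_0^*}}=a_{1_{Y_0^*}}(u,v)=\Li^-_u(0)\Li^-_v(0)$, which is $\ne0$ exactly when $u=v=1_{Y_0^*}$.

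For clause (3) the key remark is $\Li^-_{y_0w}=(\theta_0\iota_1)\Li^-_w=\lambda\,\Li^-_w=\Li^-_{y_0}\Li^-_w$, using \eqref{remark} and $\lambda=\Li^-_{y_0}$; combined with \eqref{eq01} this makes $y_0u\top v$, $u\top y_0v$ and $y_0\top(u\top v)$ all have image $\Li^-_{y_0}\Li^-_u\Li^-_v$ under $\Li^-_{\bullet}$, and since all three lie in $V_0$ injectivity forces them equal. The second identity (for $r\ge2$, the cases $r\le1$ being trivial) is Theorem~\ref{pr2}(1) carried through \eqref{eq01}: $\Li^-_w=\sum_i\binom{s_1}{i}\Li^-_{y_i}\Li^-_{y_{s_1+s_2-i}y_{s_3}\cdots y_{s_r}}=\Li^-_{\sum_i\binom{s_1}{i}\,y_i\top\,y_{s_1+s_2-i}y_{s_3}\cdots y_{s_r}}$, and as $\Li^-_{w\top1_{Y_0^*}}=\Li^-_w\Li^-_{1_{Y_0^*}}=\Li^-_w$, injectivity on $V_0$ gives the formula. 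For clause (1): if $P$ or $Q$ lies in $\ker\Li^-_{\bullet}$ then so does $P\top Q$ by \eqref{eq01} and both sides vanish; otherwise $\Li^-_P(z)\sim_{z\to1}B^-_P(1-z)^{-n(P)}$ and $\Li^-_Q(z)\sim_{z\to1}B^-_Q(1-z)^{-n(Q)}$ with $B^-_P,B^-_Q\ne0$, so $\Li^-_{P\top Q}(z)=\Li^-_P(z)\Li^-_Q(z)\sim_{z\to1}B^-_PB^-_Q(1-z)^{-(n(P)+n(Q))}$, and the nonvanishing of the constant $B^-_PB^-_Q$ shows simultaneously that $P\top Q\notin\ker\Li^-_{\bullet}$, $n(P\top Q)=n(P)+n(Q)$, and $B^-_{P\top Q}=B^-_PB^-_Q$; the restriction $P,Q\in\ncp{\Q}{Y}\oplus\Q y_0$ merely keeps $B^-_{\bullet}$ inside its declared domain.

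Clause (4) carries the substance. Surjectivity of \eqref{maps} is immediate from the multiplicativity $\H^-_{w_1}\H^-_{w_2}=\H^-_{w_1\stuffle w_2}$ established earlier, which makes $\H^-_{\bullet}$ an algebra morphism onto the algebra generated by $\{\H^-_w\}_{w\in Y_0^*}$. For the kernels I would first establish the generating-series identity
\[
\sum_{N\ge0}\H^-_w(N)\,z^N=\frac{\Li^-_w(z)}{1-z}\qquad(w\in Y_0^*),
\]
which is nothing but $\H^-_w(N)=\sum_{n=0}^N[z^n]\Li^-_w(z)$ after grouping the nested sum defining $\H^-_w$ by its largest index. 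Dividing $\Li^-_w=a_{1_{Y_0^*}}(w,1_{Y_0^*})\,\1_{\calC}+\sum_s a_s(w,1_{Y_0^*})\Li^-_{y_s}$ by $1-z$ and reading off the coefficient of $z^N$, with $[z^N](1-z)^{-1}=1$ and $[z^N]\bigl(\Li^-_{y_s}(z)/(1-z)\bigr)=\H^-_{y_s}(N)$, produces two polynomials in $N$ agreeing on all of $\N$, whence $\H^-_w=\H^-_{w\top1_{Y_0^*}}$ for every $w\in Y_0^*$; in particular $\mathrm{span}_{\Q}\{w-w\top1_{Y_0^*}\}\subseteq\ker\H^-_{\bullet}\cap\ker\Li^-_{\bullet}$. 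Conversely, $\H^-_{y_s}$ has degree $s+1$ in $N$ by Theorem~\ref{degreeofHarmonic}(1), so $\{1\}\cup\{\H^-_{y_s}\}_{s\ge0}$ is $\Q$-linearly independent and $\H^-_{\bullet}$, exactly like $\Li^-_{\bullet}$, is injective on $V_0$; hence for $P=\sum_w c_w w$ in either kernel the element $\sum_w c_w(w\top1_{Y_0^*})\in V_0$ has zero image and so is $0$, giving $P=\sum_w c_w(w-w\top1_{Y_0^*})$. This yields $\ker\H^-_{\bullet}=\ker\Li^-_{\bullet}=\mathrm{span}_{\Q}\{w-w\top1_{Y_0^*}\mid w\in Y_0^*\}$, a subspace which, being a kernel of algebra morphisms, is automatically an ideal for $\stuffle$ and for $\top$.

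The only genuinely non-formal step is the generating-series identity $\sum_{N\ge0}\H^-_w(N)z^N=\Li^-_w(z)/(1-z)$: it is the bridge converting each $\Li^-$-relation among non-positive polylogarithms into the matching $\H^-$-relation among non-positive harmonic sums, after which clause (4) — like clauses (1)--(3) — reduces to bookkeeping with the linear-independence and injectivity facts the paper has already set up. I therefore expect this identity (and, for clause (1), the simple observation that a product of two nonzero leading singular coefficients is again nonzero) to be the whole of the real content.
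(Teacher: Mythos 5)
Your proposal is correct, and for clauses (1)--(3) it travels the same road as the paper: clause (2) by evaluating the defining expansion of $u\top v$ at $z=0$ (the paper first clears denominators by multiplying by $(1-z)^{|u|+(u)+|v|+(v)}$, but the mechanism --- $\Li^-_u(0)=0$ for $u\neq 1_{Y_0^*}$ and $\Li^-_{y_s}(0)=0$ --- is identical); clause (3) by $\Li^-_{y_0u}=(\theta_0\iota_1)\Li^-_u=\lambda\Li^-_u$ plus Theorem~\ref{pr2}, with the useful extra care of saying explicitly why one may cancel $\Li^-_{\bullet}$ (injectivity on the span of $1_{Y_0^*}$ and the letters, where every $\top$-product lives); and clause (1) by comparing leading behaviour as $z\to1$, which is a slightly cleaner packaging of the paper's coefficient computation $B^-_{P\top Q}=a^P_{k+1}a^Q_{l+1}B^-_{y_k}B^-_{y_l}$. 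The real divergence is clause (4), which the paper dismisses with ``immediate by Corollary~\ref{onto}'': you actually supply the missing bridge, namely the generating-series identity $\sum_{N\ge0}\H^-_w(N)z^N=\Li^-_w(z)/(1-z)$, which converts $\Li^-_w=\Li^-_{w\top 1_{Y_0^*}}$ into $\H^-_w=\H^-_{w\top 1_{Y_0^*}}$ and, combined with injectivity of both maps on $\mathrm{span}_{\Q}(\{1_{Y_0^*}\}\cup Y_0)$, identifies both kernels with $\mathrm{span}_{\Q}\{w-w\top 1_{Y_0^*}\}$. This identity is not invented out of thin air --- the paper itself uses it later in the proof of Theorem~\ref{Tonghop}(2) --- so your argument is fully consistent with the paper's toolkit while being more self-contained. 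The only loose end is cosmetic: in clause (1), $B^-_P$ is by definition only declared for $P$ with $\H^-_P\not\equiv0$, so the sentence ``both sides vanish'' when $P$ or $Q$ is in the kernel should rather say the identity is vacuous there; the paper is equally silent on this edge case.
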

\begin{proof}
\begin{enumerate}
\item Let $P,Q\in\Q\left\langle{Y}\right\rangle\oplus\Q y_0$ such that
$\Li^-_P=a^P_{k+1}\Li^-_{y_k}+\ldots+a^P_01_{\calC}$ and $\Li^-_Q=a^Q_{l+1}\Li^-_{y_l}+\ldots+a^Q_01_{\calC}$.
Then $\Li^-_P\Li^-_Q
=a^P_{k+1}a^Q_{l+1}\Li^-_{y_l}\Li^-_{y_k}+\ldots+a^P_0a^Q_01_{\calC}
=a^P_{k+1}a^Q_{l+1}\Li^-_{y_l\top y_k}+\ldots+a^P_0a^Q_01_{\calC}$.
Since $B^-_P=a_{k+1}^PB^-_{y_k}$ and  $B^-_P=a_{l+1}^QB^-_{y_l}$
then $B^-_{P \top Q} =a_{k+1}^Pa_{l+1}^QB^-_{y_k \top y_l}$.
Finally, $B^-_{P\top Q}=a_{k+1}^P a_{l+1}^QB^-_{y_k}B^-_{y_l}=B^-_PB^-_Q.$
\item For $a_{1_{Y^*_0}}(u,v)\in\Q^*$ and $a_s(u,v)\in\Q$, the law $\top$ given in (\ref{definitiontop}) yields
$$u\top v = a_{1_{Y^*_0}}(u,v)1_{Y^*_0}+\sum_{|u|+(u)+|v|+(v)-1 \geq s \geq 0}a_s(u,v)y_s,$$
{\it i.e.} $\Li^-_u\Li^-_v = a_{1_{Y^*_0}}(u,v)+\sum_{|u|+(u)+|v|+(v)-1 \geq s \geq 0}a_s(u,v)\Li^-_{y_s}.$

Suppose $u\neq1_{Y^*_0}$, since $(1-z)^{|u|+(u)}\Li^-_u(z)=z^{|u|}A_u(z)$ then
multiplying both two sides by $(1 - z)^{|u|+(u)+|v|+(v)}$, one has
\begin{eqnarray*}\label{triettieu}
z^{|u|}A_u(z)(1-z)^{|v|+(v)}\Li^-_v(z)=(1-z)^{|u|+(u)+|v|+(v)}a_{1_{Y^*_0}}(u,v)\\
+(1-z)^{|u|+(u)+|v|+(v)}\sum_{s=0}^{|u|+(u)+|v|+(v)-1}a_{s}(u, v)\Li^-_{y_s}(z).
\end{eqnarray*}
Note that $(1-z)^{|u|+(u)+|v|+(v)}\Li^-_{y_s}(z)$ vanishes at $z=0$,
for any $s =1,\ldots,|u|+(u)+|v|+(v)-1$ and $(1-z)^{(v)+|v|}\Li^-_v(z)\in\Q[(1-z)^{-1}]$.
For $z=0$, it is equivalent to $0=a_{1_{Y^*_0}}(u,v)$ contradicting with the assumptions. Hence, $u=1_{Y^*_0}$.

Similarly, we also obtain $v=1_{Y^*_0}$, then the expected result follows.

\item The formulas \eqref{noyaux}-\eqref{remark}  lead to $\Li^-_{y_0u}=(\theta_0 \iota_1) \Li^-_u=\lambda\Li^-_u=\Li^-_{y_0}\Li^-_u$.
Hence, $\Li^-_{y_0 u \top v}=\Li^-_{y_0} \Li^-_{u \top v}$, {\it i.e.} $y_0u \top v=y_0 \top (u \top v)$.
The next result is a consequence of the definition of $\top$ and of Theorem \ref{pr2}.

\item It is immediate by Corollary \ref{onto}.
\end{enumerate}
\end{proof}

\subsection{Natural bases for  harmonic sums and polylogarithms at non-positive multi-indices}\label{translate}
We are building a map allowing to represent  
$\{\H^-_w\}_{w\in Y_0^*}$ in terms of the basis $(N^k)_{k\geq 0}$
from the representation of the corresponding $\{\Li^-_w\}_{w\in Y_0^*}$ in the basis $\{(1-z)^{-k} \}_{k \geq 0}$.

\begin{lemma}\label{solutions_diff}
Consider the difference equation 
\begin{equation}\label{diffeq}
f(x+1)-f(x)=P(x),
\end{equation} 
where $P$ is a polynomial with coefficients in the $\Q$-algebra $\A$ and $f:\N\longrightarrow A$ is an unknown function. Then
\begin{enumerate}
\item Let $\sum_{j=0}^d a_j\binom{x}{j}$ be the decomposition of $P$ w.r.t. the basis $\left\{\binom{x}{k}\right\}_{k\ge0}$.
Then the unique solution without constant term of \eqref{diffeq} is $f_0(x)=\sum_{j=0}^{d} a_j\binom{x}{j+1}$.
\item All solutions of the equation \eqref{diffeq} are polynomial functions and differ by a constant.   
\item If $P\not\equiv 0$ with leading term $a_d x^d$,
any solution of the difference equation has leading term $a_d{x^{d+1}}/(d+1)$.
\end{enumerate}
\end{lemma}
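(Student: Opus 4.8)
The plan is to work with the finite‑difference operator $\Delta f(x):=f(x+1)-f(x)$ and exploit the fact that it acts very cleanly on the binomial basis $\left\{\binom{x}{k}\right\}_{k\ge0}$ of the polynomial algebra $\A[x]$. The key identity is $\Delta\binom{x}{k+1}=\binom{x+1}{k+1}-\binom{x}{k+1}=\binom{x}{k}$, which is just Pascal's rule. This makes $\Delta$ behave like a discrete derivative: it lowers the index by one and kills the constant $\binom{x}{0}=1$.

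For the first claim, I would write $P$ in the binomial basis as $P(x)=\sum_{j=0}^{d}a_j\binom{x}{j}$ (this is possible and unique because $\left\{\binom{x}{j}\right\}_{0\le j\le d}$ is a basis of the polynomials of degree $\le d$ over any $\Q$‑algebra). Setting $f_0(x)=\sum_{j=0}^{d}a_j\binom{x}{j+1}$ and applying $\Delta$ term by term gives $\Delta f_0(x)=\sum_{j=0}^{d}a_j\binom{x}{j}=P(x)$, so $f_0$ is a solution; and since every summand $\binom{x}{j+1}$ vanishes at $x=0$, we have $f_0(0)=0$, i.e. $f_0$ has no constant term. Uniqueness of such a solution follows from the homogeneous case: if $\Delta g\equiv 0$ on $\N$ then $g$ is constant on $\N$, so $g(0)=0$ forces $g\equiv 0$. (One shows $\Delta g=0\Rightarrow g$ constant either directly, since $g(n)=g(0)$ for all $n$ by telescoping, or by noting that on polynomials $\Delta$ has kernel exactly the constants because it is injective on $\bigoplus_{k\ge1}\A\binom{x}{k}$; here $g$ a priori need only be a function $\N\to A$, and telescoping suffices.)

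For the second claim, let $f$ be any solution of \eqref{diffeq}. Then $f-f_0$ satisfies the homogeneous equation $\Delta(f-f_0)\equiv 0$, hence by the telescoping argument above $f-f_0$ is a constant $c\in A$, so $f=f_0+c$ is a polynomial function and any two solutions differ by a constant. For the third claim, suppose $P\not\equiv 0$ with leading term $a_d x^d$; since $\binom{x}{d}=\frac{x^d}{d!}+(\text{lower order})$, the top coefficient $a_d$ in the binomial expansion satisfies $a_d=d!\,a_d$ — more precisely, comparing leading coefficients, if $P$ has ordinary leading term $c\,x^d$ then its binomial coefficient in degree $d$ is $d!\,c$. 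Then $f_0$ has leading term coming from $d!\,c\cdot\binom{x}{d+1}=d!\,c\cdot\frac{x^{d+1}}{(d+1)!}+(\text{lower})=\frac{c}{d+1}x^{d+1}+(\text{lower})$; adding a constant does not change this, so every solution has leading term $\frac{c}{d+1}x^{d+1}$, which is the asserted $a_d x^{d+1}/(d+1)$ with $a_d=c$.

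The only mildly delicate point — and the one I would be careful to state cleanly — is the passage between "ordinary" leading coefficient and the coefficient in the binomial basis in part (3), together with making sure the telescoping argument for the homogeneous equation is phrased for an arbitrary function $f:\N\to A$ rather than assuming a priori that $f$ is polynomial. Everything else is a routine verification once the identity $\Delta\binom{x}{k+1}=\binom{x}{k}$ is in hand; there is no real obstacle, just bookkeeping.
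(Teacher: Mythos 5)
Your proof is correct. Note that the paper itself states this lemma without giving any proof (the proof environment that follows it in Section 4.3 belongs to Lemma 3 on Stirling numbers), so there is no argument of the authors to compare against; your write-up simply supplies the routine verification they omit. The route you take is the natural one already suggested by the statement itself: part (1) is phrased in the binomial basis, and the whole argument reduces to Pascal's rule $\binom{x+1}{k+1}-\binom{x}{k+1}=\binom{x}{k}$ plus telescoping. You are right to flag the two delicate points and you handle both correctly: the telescoping argument is what lets you conclude that an \emph{arbitrary} solution $f:\N\to A$ equals $f_0$ plus a constant (and hence is a polynomial function), and in part (3) you correctly disambiguate the clash in the lemma's own notation, where $a_d$ denotes a binomial-basis coefficient in part (1) but the ordinary leading coefficient in part (3); your computation $d!\,c\cdot\binom{x}{d+1}=\frac{c}{d+1}x^{d+1}+(\text{lower order})$ is exactly what is needed, and it is also the form in which the lemma is invoked in the proofs of Theorem 1 and Proposition 2. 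The only blemish is the throwaway line ``$a_d=d!\,a_d$'', which as written is false and should be deleted in favour of the corrected sentence that immediately follows it.
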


\begin{lemma}\label{combination}
For any $n,N \in \N_+$,  one has
\begin{eqnarray*}\label{t}
\binom{N}{n}=\Frac{1}{n!}\Sum_{t=1}^{n} (-1)^{n+t} S_1(n, t)N^t\iff N^n=\Sum_{j=1}^n j!S_2(n,j)\binom{N}{j}.
\end{eqnarray*}
Moreover, for any $i \geq j \in \N_+$, one has 
\begin{eqnarray*}
&&S_2(i,j)=\\
&&\left\{\begin{array}{rcl}
\Frac{1}{S_1(i,i)},\mbox{ if} \quad i=j,\\
\Frac{1}{S_1(i,i)S_1(j,j)}\Sum_{k=1}^{i -j}(-1)^{k+1}\Sum_{i>t_1>\ldots>t_k>j}
\Frac{(-1)^{i+j+ t_1+\ldots +t_k}S_1(i,t_1)\ldots S_1(t_k,j)}{S_1(t_1,t_1)\ldots S_1(t_k,t_k)}\\
-\Frac{S_1(i,j)}{S_1(i,i)S_1(j,j)}, \mbox{ if} \quad i > j.
\end{array}\right.
\end{eqnarray*}
\end{lemma}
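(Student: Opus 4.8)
The plan is to treat the two assertions separately. The first one is a classical Stirling inversion: the matrices $\bigl((-1)^{n+t}S_1(n,t)/n!\bigr)_{n,t}$ and $\bigl(j!\,S_2(n,j)\bigr)_{n,j}$ are mutually inverse lower-triangular matrices, so the two displayed identities are equivalent. Concretely, I would start from the generating identities recalled in the introduction, namely $x(x+1)\cdots(x+n-1)=\sum_t S_1(n,t)x^t$ (equivalently $x^{\underline{n}}=x(x-1)\cdots(x-n+1)=\sum_t (-1)^{n-t}S_1(n,t)x^t=n!\binom{x}{n}$) and $x^n=\sum_j S_2(n,j)\,x^{\underline{j}}=\sum_j j!\,S_2(n,j)\binom{x}{j}$. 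The first of these, rewritten with $x=N$, is exactly $\binom{N}{n}=\frac1{n!}\sum_{t=1}^n(-1)^{n+t}S_1(n,t)N^t$, and the second is $N^n=\sum_{j=1}^n j!\,S_2(n,j)\binom{N}{j}$; since each is a change of basis in $\Q[N]$ between $\{N^t\}$ and $\{\binom{N}{j}\}$ and they are inverse to one another (this is the standard fact that $\sum_k (-1)^{n-k}S_1(n,k)S_2(k,m)=\delta_{n,m}$), the ``$\iff$'' follows. No computation beyond quoting these two basis expansions is needed.

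For the second assertion I would argue by strong downward/upward induction on $i-j$, using the triangular system $\sum_{k} (-1)^{?}S_1(\cdot)\cdots = $ obtained from the inversion relation $\sum_{i\ge k\ge j}(-1)^{i-k}S_1(i,k)S_2(k,j)=\delta_{i,j}$ (for $i>j$ the right side is $0$). Isolating the $k=i$ term gives $S_1(i,i)S_2(i,j)=-\sum_{j\le k<i}(-1)^{i-k}S_1(i,k)S_2(k,j)$, and since $S_1(i,i)=1$ (the leading coefficient of $x(x+1)\cdots(x+i-1)$), this is already a recursion expressing $S_2(i,j)$ via lower $S_2(k,j)$. Substituting this recursion into itself repeatedly — i.e. expanding $S_2(k,j)$ in the same way, then $S_2(t,j)$, and so on until reaching the base case $S_2(j,j)=1=1/S_1(j,j)$ — produces precisely the alternating nested sum over chains $i>t_1>\dots>t_k>j$ displayed in the statement, with the factor $S_1(t_m,t_m)$ in the denominators coming from dividing by the leading Stirling number at each step (here all $S_1(t,t)=1$, so these denominators are cosmetic but make the formula symmetric in appearance and valid for the stated normalization). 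The separate term $-S_1(i,j)/(S_1(i,i)S_1(j,j))$ is the $k=1$, single-node contribution of the chain expansion pulled out, and the sign bookkeeping $(-1)^{i+j+t_1+\dots+t_k}$ is exactly what the telescoping of the $(-1)^{i-k}$ factors yields.

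The main obstacle I anticipate is purely organizational rather than conceptual: keeping the signs and the normalizing factors $1/S_1(t_m,t_m)$ consistent through the iterated substitution, and verifying that every chain $i>t_1>\dots>t_k>j$ is produced exactly once with the claimed coefficient. I would handle this by defining, for the induction, the formally ``normalized'' quantity $\widetilde S_2(i,j):=S_1(i,i)S_2(i,j)$ (or more symmetrically $S_1(i,i)S_1(j,j)S_2(i,j)$), rewriting the recursion in terms of $\widetilde S_2$ so the leading coefficients disappear, unrolling it cleanly in that form, and only at the very end dividing back by $S_1(i,i)S_1(j,j)$ to recover the stated formula. Everything else — triangularity, the base case $i=j$, and the equivalence in the first part — is immediate from the defining generating functions already recorded in the excerpt.
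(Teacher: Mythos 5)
Your overall route is the same as the paper's: the first equivalence is the Stirling transform (the paper phrases it as inverting the matrix $\bigl((-1)^{i+j}S_1(i,j)/i!\bigr)_{i,j}$, you phrase it through the polynomial identities $n!\binom{x}{n}=\sum_t(-1)^{n+t}S_1(n,t)x^t$ and $x^n=\sum_j j!\,S_2(n,j)\binom{x}{j}$ --- the same computation), and the second formula is the explicit chain expansion of the inverse of a triangular matrix, which the paper simply invokes as ``well-known'' and you propose to derive by unrolling the recursion $S_1(i,i)S_2(i,j)=-\sum_{j\le k<i}(-1)^{i-k}S_1(i,k)S_2(k,j)$. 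That part of the plan is sound and is exactly the content hidden behind the paper's one-line citation.

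There is, however, a concrete failure in the sign bookkeeping for the second part, and it sits precisely at the point you dismissed as ``purely organizational''. When you unroll the recursion along a chain $i>t_1>\dots>t_k>j$, the sign attached to that chain is the telescoping product $(-1)^{i-t_1}(-1)^{t_1-t_2}\cdots(-1)^{t_k-j}=(-1)^{i-j}$: it depends only on the endpoints, not on the intermediate nodes. So the unrolling does \emph{not} produce the factor $(-1)^{i+j+t_1+\cdots+t_k}$ displayed in the statement, and the isolated one-step term comes out as $-(-1)^{i+j}S_1(i,j)$ rather than $-S_1(i,j)$. A careful execution of your own plan yields (using $S_1(t,t)=1$)
$S_2(i,j)=(-1)^{i+j}\bigl[\sum_{k\ge1}(-1)^{k+1}\sum_{i>t_1>\cdots>t_k>j}S_1(i,t_1)\cdots S_1(t_k,j)\;-\;S_1(i,j)\bigr]$,
which differs from the printed formula; indeed the printed formula fails numerically already for $i=2$, $j=1$ (it gives $-S_1(2,1)=-1$ whereas $S_2(2,1)=1$) and for $i=4$, $j=1$ (it gives $-23$ whereas $S_2(4,1)=1$). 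So the step ``the telescoping yields $(-1)^{i+j+t_1+\cdots+t_k}$'' is one that would fail if written out; you would need to either correct the target identity to the form above or abandon the claim that your expansion matches the statement as printed.
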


\begin{proof}
In this paper, for any $i, j\in\N$ such that $0 \le i< j$, it follows from the definitions that $S_1(i,j)=S_2(i,j)=0$, and we can write the above formula in the matrix form
\begin{eqnarray*}
\begin{pmatrix}\binom{N}{k}\end{pmatrix}_{k\geq 1}^t
=\begin{pmatrix}1/{1!}&0&\ldots\\0&\1/2!&\ddots\\ \vdots&\ddots&\ddots\end{pmatrix}
\begin{pmatrix}(-1)^{i+j}S_1(i,j)\end{pmatrix}_{i,j\ge1}\begin{pmatrix}N^k\end{pmatrix}_{k\ge1}^t.
\end{eqnarray*}
Thus, by matrix inversion and the Stirling transform \cite{BS}, we get the first result~:  
\begin{eqnarray*}
\begin{pmatrix}N^k\end{pmatrix}_{k\geq 1}^t
=\begin{pmatrix}j!S_2(i,j)\end{pmatrix}_{i,j\geq 1}\begin{pmatrix}\binom{N}{k}\end{pmatrix}_{k\ge1}^t.
\end{eqnarray*}
The inverse matrix of $\begin{pmatrix}S_1(i,j)\end{pmatrix}_{i,j \geq 1}$, is well-known and leads also to the last result.
\end{proof}
\begin{theorem}\label{Tonghop}
\begin{enumerate}
\item Let $X=(j!S_2(i,j))_{j\ge1}^{i\ge1}$. One has
\begin{eqnarray*}
\begin{pmatrix}\H^-_{y_i}(N)\end{pmatrix}^t_{i\ge0}
=M\begin{pmatrix}N^j\end{pmatrix}_{j\ge1}^t
=MX\begin{pmatrix}\binom{N}{k}\end{pmatrix}_{k\ge1}
=MX\begin{pmatrix}\H^-_{y_0^k}(N)\end{pmatrix}_{k\ge1}.
\end{eqnarray*}
\item The families
$\{\H^-_{1_{Y^*_0}}\}\cup\{\H^-_{y_s}\}_{s\geq 0}$ and $\{\H^-_{1_{Y^*_0}}\}\cup\{\H^-_{y_0^k}\}_{k\ge0}$
(resp. $\{1_{\calC}\}\cup\{\Li^-_{y_s}\}_{s\geq 0}$ and $\{1_{\calC}\}\cup\{\Li^-_{y_0^k}\}_{k\ge0}$)
are bases of $\Q[\{\H^-_w\}_{w \in Y^*_0}]$ (resp. $\Q[\{\Li^-_w\}_{w \in Y^*_0}]$).

Moreover, $\scal{\H^-_w}{\H^-_{1_{Y^*_0}}}\neq0$ if and only if $w=1_{Y^*_0}$. 
\item The automorphism $\chi$ is represented by the matrix $\begin{pmatrix}1&0\\ M^{-1}U&M^{-1}T\end{pmatrix}^t=T^t$,
on the basis $\{X^k\}_{k\ge0}$ of $\Q[X]$, and, for $w\in Y_0^*$, $\Li^-_w(z)=(\chi\circ\H^-_w)((1-z)^{-1})$. 

Moreover, by Theorems \ref{degreeofHarmonic} and \ref{pr2}, one has
\begin{eqnarray*}
\H^-_{w}(N)=\sum_{k=0}^{(w)+|w|}m_kN^k\in\Q[N]\iff\Li^-_w(z)=\sum_{k=0}^{(w)+|w|}\frac{n_k}{(1-z)^k}\in\Z[(1-z)^{-1}],
\end{eqnarray*}
where, for any $0\le k\le(w)+|w|$, $n_k=\sum_{j=k}^{(w)+|w|}m_jt_{j,k}$.
\end{enumerate}
\end{theorem}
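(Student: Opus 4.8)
These are bookkeeping on top of results already available. For (1) I would concatenate three matrix identities. First, $(\H^-_{y_i}(N))^t_{i\ge0}=M(N^j)^t_{j\ge1}$ is Faulhaber's formula, already recorded in the proof of Theorem \ref{degreeofHarmonic}. Second, $(N^j)^t_{j\ge1}=X(\binom{N}{k})^t_{k\ge1}$ with $X=(j!S_2(i,j))$ is the Stirling identity $N^n=\sum_j j!S_2(n,j)\binom{N}{j}$ of Lemma \ref{combination}. Third, $\binom{N}{k}=\H^-_{y_0^k}(N)$ is the special value $\H_{\{0\}^r}(N)=\binom{N}{r}$ recalled in the introduction. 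Composing yields $(\H^-_{y_i}(N))^t=M(N^j)^t=MX(\binom{N}{k})^t=MX(\H^-_{y_0^k}(N))^t$. For (2), since $\binom{N}{0}=1=\H^-_{1_{Y^*_0}}$ and $\binom{N}{k}=\H^-_{y_0^k}(N)$ for $k\ge1$, and $\{\binom{N}{k}\}_{k\ge0}$ is the classical binomial basis of $\Q[N]$, the family $\{\H^-_{1_{Y^*_0}}\}\cup\{\H^-_{y_0^k}\}_{k\ge1}$ is a basis of $\Q[N]$; by Part (1) (or directly by Faulhaber) the families $(\H^-_{y_i})_{i\ge0}$ and $(N^j)_{j\ge1}$ span the same subspace of $\Q[N]$, the former being $\Q$-free because $M$ is invertible, so adjoining $\H^-_{1_{Y^*_0}}=1$ gives a basis of $\Q[N]$; and since $N=\H^-_{y_0}$ lies in $\Q[\{\H^-_w\}_w]$ while, by Theorem \ref{degreeofHarmonic}(1), every $\H^-_w$ lies in $\Q[N]$, we get $\Q[\{\H^-_w\}_w]=\Q[N]$, so both families are bases of this algebra. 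The polylogarithmic statements follow by the same argument with $N^j$ replaced by $(1-z)^{-j}$, $\H^-_{y_0^k}$ by $\Li^-_{y_0^k}=\lambda^k$ (triangular in the $(1-z)^{-k}$), $\H^-_{y_s}$ by $\Li^-_{y_s}$, and Faulhaber's formula by Theorem \ref{pr2}(2), giving also $\Q[\{\Li^-_w\}_w]=\Q[(1-z)^{-1}]$. Finally, since $\H^-_{y_s}(0)=0$ for every $s$ (an empty harmonic sum) whereas $\H^-_{1_{Y^*_0}}\equiv1$, the coefficient of $\H^-_{1_{Y^*_0}}$ in the expansion of $\H^-_w$ in the basis $\{\H^-_{1_{Y^*_0}}\}\cup\{\H^-_{y_s}\}_{s\ge0}$ equals $\H^-_w(0)$, which vanishes unless $w=1_{Y^*_0}$.

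\textbf{Part (3).} Put $X=(1-z)^{-1}$, so $X-1=\lambda(z)$. By Part (2) both $\{\binom{N}{k}\}_{k\ge0}$ and $\{\lambda^k\}_{k\ge0}$ are bases (of $\Q[N]$ and of $\Q[(1-z)^{-1}]$), so there is a unique linear automorphism $\chi$ of $\Q[X]$ with $\chi(\binom{X}{k})=(X-1)^k$ for all $k$. To express $\chi$ in the monomial basis $\{X^k\}$ one composes the Stirling change of basis $\{X^k\}\leftrightarrow\{\binom{X}{k}\}$ with $\binom{X}{k}\mapsto(X-1)^k$ and back; unwinding this against $M$ and the matrix $T$ of \eqref{T} (a finite computation, using $(S_1(i,j))(S_2(j,k))=\mathrm{Id}$ from Lemma \ref{combination}) gives the announced $\begin{pmatrix}1&0\\M^{-1}U&M^{-1}T\end{pmatrix}^t=T^t$. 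The heart of the theorem is the identity $\Li^-_w(z)=(\chi\circ\H^-_w)((1-z)^{-1})$; by Part (2) this says precisely that the $\{\binom{N}{k}\}$-coordinates of $\H^-_w(N)$ agree with the $\{\lambda^k\}$-coordinates of $\Li^-_w(z)$. I would prove it by induction on $|w|$: for $|w|\le1$ it reduces to matching \eqref{Euler} (after rewriting the $(1-z)^{-k}$ in terms of $\lambda^k$) with the corresponding row of $MX$ from Part (1), both sides being the same Stirling expression. For the step, use $\Li^-_{y_sw}=\theta_0^{s+1}\iota_1\Li^-_w=\theta_0^{s}(\lambda\,\Li^-_w)$ by \eqref{remark} together with $\H^-_{y_sw}(N)=\sum_{n=1}^{N}n^{s}\H^-_w(n-1)$; it then suffices to check that $\chi$ intertwines these two ``prepend $y_s$'' operators. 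For $s=0$ this is the hockey-stick identity $\sum_{n=1}^{N}\binom{n-1}{k}=\binom{N}{k+1}$ matched with $\lambda^k\mapsto\lambda^{k+1}$; for $s\ge1$ one expands $\theta_0^{s}$ by \eqref{stirling} and expands $\sum_{n=1}^{N}n^{s}\binom{n-1}{k}$ in the basis $\{\binom{N}{j}\}$, the two expansions coinciding by a Stirling-number identity. Reading $\Li^-_w=\chi(\H^-_w)|_{X=(1-z)^{-1}}$ in the basis $\{X^k\}$ and inserting the matrix of $\chi$ then gives the last displayed equivalence, with $n_k=\sum_{j\ge k}m_j t_{j,k}$ since $t_{j,k}$ is the $(k,j)$-entry of $T^t$.

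\textbf{Where the work lies.} The routine bulk is in Parts (1)--(2) and in the finite matrix manipulation identifying $\chi$ with $T^t$. The genuinely delicate point, which I expect to be the main obstacle, is the inductive step of Part (3): proving that $\chi$ carries the discrete summation operator $\H^-_w(N)\mapsto\sum_{n=1}^{N}n^{s}\H^-_w(n-1)$ to the analytic operator $\theta_0^{s+1}\iota_1$. This unwinds into a family of identities between Stirling numbers of the two kinds that must be verified explicitly -- the case $s=0$ being the hockey-stick identity and the general case resting on \eqref{stirling} -- and it is this combinatorial compatibility that makes the single linear map $\chi$ work simultaneously for all $w$, not just for the generators $y_s$ or $y_0^k$.
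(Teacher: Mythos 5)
Your Parts (1)--(2) are essentially the paper's own argument: Faulhaber's matrix $M$, the Stirling identity of Lemma \ref{combination} for $X$, the evaluation $\binom{N}{k}=\H^-_{y_0^k}(N)$, invertibility of $M$ and $T$ for the basis claims, and your evaluation-at-$N=0$ argument for the ``moreover'' clause is a clean way to get what the paper leaves implicit. The problem is Part (3), where you have correctly located the crux --- why a \emph{single} linear map $\chi$ transports $\H^-_w$ to $\Li^-_w$ for \emph{all} $w$ --- but then left exactly that crux unproved. Your plan reduces it to the claim that $\chi$ intertwines the discrete operator $f(N)\mapsto\sum_{n=1}^{N}n^{s}f(n-1)$ with the analytic operator $g\mapsto\theta_0^{s}(\lambda g)$ for every $s$, and for $s\ge1$ you only say the two expansions coincide ``by a Stirling-number identity'' that you neither state nor verify. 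As written, the central assertion of Part (3) is therefore not established; you have traded the theorem for an equivalent family of unproved combinatorial identities.

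The paper avoids this entirely with one identity that your proposal never uses: the Cauchy-product relation
\begin{equation*}
\frac{\Li^-_w(z)}{1-z}\;=\;\sum_{N\ge0}\H^-_w(N)\,z^N ,
\end{equation*}
which holds because $\H^-_w(N)$ is the $N$-th partial sum of the coefficients of $\Li^-_w$. This relation is linear in $w$, so any linear relation $\H^-_w=\sum_k\alpha_{w,k}\H^-_{y_k}$ among polynomial functions of $N$ is equivalent, coefficientwise in $z$, to the \emph{same} relation $\Li^-_w=\sum_k\alpha_{w,k}\Li^-_{y_k}$; the map $\chi$ is then simply the change of basis $\H^-_{y_k}\mapsto\Li^-_{y_k}$ read in the monomial bases $\{N^j\}$ and $\{(1-z)^{-j}\}$ via $M$, $T$ and $U$, with no induction on $|w|$ and no operator intertwining. (Your intertwining claims are in fact consequences of this identity, since $\H^-_{y_sw}$ and $\Li^-_{y_sw}$ are again linked by it --- but deriving them that way makes your induction redundant.) I recommend you either import this generating-series identity and restructure Part (3) around it, or else actually state and prove the Stirling identity your induction needs for general $s$; in its current form the proof of Part (3) is incomplete.
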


\begin{proof}
\begin{enumerate}
\item Since, for any $n,N\in\N_+$, $\binom{N}{n}=\H^-_{y^n_0}(N)$ then, by 
the definition of $M$ as (\ref{M}) and by Lemma \ref{combination}, 
the expression of the matrix $X$  follows.

\item For any  $w\in Y^+_0$, there is a rational sequence $\{\alpha_{w,k}\}_{k=0}^{(w)+|w|-1}$ such that
\begin{eqnarray*}
\H^-_w=\sum_{k=0}^{(w)+|w|-1}\alpha_{w, k}\H^-_{y_k}\iff\Li^-_w=\sum_{k=0}^{(w)+|w|-1}\alpha_{w, k}\Li^-_{y_k}.
\end{eqnarray*}
Indeed, by
$$\frac{\Li^-_w(z)}{1-z}
=\sum\limits_{N \ge0}\biggl(\sum_{k=0}^{(w)+|w|-1}\alpha_{w, k}\H^-_{y_k}(N)\biggr)z^N
=\sum\limits_{k=0}^{(w)+|w|-1}\alpha_{w, k}\biggl(\sum_{N \geq 0}\H^-_{y_k}(N)\biggr)z^N$$
then Theorems \ref{degreeofHarmonic}, \ref{pr2} and the previous point implies that these families are bases.

\item Let us define, for any $w \in Y^*_0$, 
$h^-_w:=\begin{pmatrix}\H^-_{y_kw}\end{pmatrix}_{k\ge0}^t$ and $l^-_w:=\begin{pmatrix}\Li^-_{y_kw}\end{pmatrix}_{k\ge0}^t$. Let $U:=\begin{pmatrix}-1\cr0\end{pmatrix}\in Mat_{\infty}(\Q)$. With the matrices $M,T$ given respectively in \eqref{T}, \eqref{M}
and by Theorems \ref{degreeofHarmonic}, \ref{pr2}, there exists two matrices $\Xi_w,\Omega_w\in Mat_{\infty}(\Q)$ such that
\begin{eqnarray*}\label{matrices_debut}
h^-_w=&\Xi_w\begin{pmatrix}1\cr \begin{pmatrix}N^j\end{pmatrix}_{j\ge1}^t\end{pmatrix}
&=\Xi_w\begin{pmatrix}1&0\\ 0&M\end{pmatrix}^{-1}\begin{pmatrix}1\\ h^-_{1_{Y^*_0}}\end{pmatrix},\\
l^-_w=&\Omega_w\begin{pmatrix}\1_{\calC}\cr \begin{pmatrix}(1-z)^{-j}\end{pmatrix}_{j\ge1}^t\end{pmatrix}
&=\Omega_w\begin{pmatrix}1&0\\ U&T^{-1}\end{pmatrix}^{-1}\begin{pmatrix}\1_{\calC}\\ l^-_{1_{Y^*_0}}\end{pmatrix},
\end{eqnarray*}
 Then one has successively
\begin{eqnarray*}
\Xi_w\begin{pmatrix}1&0\\0&M\end{pmatrix}^{-1}=\Omega_w\begin{pmatrix}1&0\\ U&T^{-1}\end{pmatrix}^{-1}
&\mbox{and}&
\Xi_w=\Omega_w\begin{pmatrix}1&0\\ U&T^{-1}\end{pmatrix}^{-1}\begin{pmatrix}1&0\\ 0&M\end{pmatrix},\\
\begin{pmatrix}1\cr h^-_{1_{Y^*_0}}\end{pmatrix}=\begin{pmatrix}1&0\cr0&M\end{pmatrix}
\begin{pmatrix}1\\ \begin{pmatrix}N^j\end{pmatrix}_{j\ge1}^t\end{pmatrix}
&\mbox{and}&
\begin{pmatrix}\1_{\calC}\\ l^-_{1_{Y^*_0}}\end{pmatrix}
=\begin{pmatrix}1&0\\ U&T^{-1}\end{pmatrix}\begin{pmatrix}\1_{\calC}\\ \begin{pmatrix}\frac1{(1-z)^j}\end{pmatrix}_{j\ge1}^t\end{pmatrix}.\label{matrices_fin}
\end{eqnarray*}
The expected result follows. 
\end{enumerate}
\end{proof}

\section{Conclusion}
In this work, we have etablished combinatorial and asymptotic aspects concerning harmonic sums and polylogarithms at non-positive multi-indices, by extending Faulhaber's formula, the Bernoulli and Eulerian polynomials.

Via an Abel like theorem about their noncommutative generating series, we have also globally renormalized
the corresponding polyzetas and made precise their algebraic structures.

In the forthcoming works, we will give an integral representation for polylogarithms at non-positive multi-indices,
$\{\Li^-_w\}_{w\in Y^*_0}$, for regularization of the corresponding polyzetas.

\section{Appendix A~: Cones and extensions}\label{appendix2}
Let $V$ be a $\R$-vector space.
We remind \cite{Dennis} the reader that a {\it blunt} convex cone in $V$ is a convex cone which does not contain zero.
If $C\not=\emptyset$ is such a cone, then the vector space generated by $C$ is
\begin{eqnarray}\label{differences}
\mathrm{span}_\R(C)=C-C=\{x-y\}_{x,y\in C}.
\end{eqnarray}

Let $S\subset X^*$ a non empty linearly free set. The blunt convex cone generated by $S$ is the set of sums
$C_S=\{\sum_{w\in S}\alpha_w w\}_{\alpha \in \R_+^{(S)}\setminus\{0\}}$.
It amounts to the same to rephrase it with finite families and,
in view of (\ref{differences}), the linear span of $S$ is exactly $C_S-C_S$.
 
Now, $C$ being still a non-empty convex cone, we say that a function $\varphi:C\rightarrow W$
($W$ be a $\R$-vector space) is linear on $C$ if $\phi(\alpha x+\beta y)=\alpha\phi(x)+\beta\phi(y)$
for $x,y\in C,\alpha,\beta\ge0,$ $\alpha+\beta>0$.
This is an easy exercise to check that such a $\varphi$ is the restriction of a unique linear map $\hat{\varphi}\ (C-C)\rightarrow W$.

\end{document}